\newtheorem{thm}{Theorem}[section]
\newtheorem{lem}[thm]{Lemma}
\newtheorem{prop}[thm]{Proposition}
\newtheorem{cor}[thm]{Corollary}
\theoremstyle{definition}
\newtheorem{remark}{Remark}[section]
\newtheorem{example}{Example}[section]
\newtheorem{problem}{Problem}[section]
\newcommand\PT{\mathcal{PT}}
\newcommand\PTB{\mathcal{PT}_\mathrm{B}}
\newcommand\topone{\operatorname{topone}}
 \newcommand\toponez{\operatorname{top}_{0,1}}
\renewcommand\neg{\operatorname{neg}}
\newcommand\diag{\operatorname{diag}}
\newcommand\urc{\operatorname{urc}}
\newcommand\sgn{\operatorname{sign}}
\newcommand\urr{\operatorname{urr}}
\newcommand\RLmin{\operatorname{RLmin}}
\newcommand\RLmax{\operatorname{RLmax}}
\newcommand\zp{\zeta^{\rm perm}}
\newcommand\za{\zeta^{\rm alt}}
\newcommand\zaB{\zeta^{\rm alt}_\mathrm{B}}
\newcommand\CN{\Phi}
\newcommand\CNB{\Phi_\mathrm{B}}
\newcommand\invCNB{\Psi_\mathrm{B}}
\def\cell(#1,#2)[#3]{
\ax=#2 \ay=#1
\multiply\ay by-1
\bx=\ax \by=\ay
\cx=\ax \cy=\ay
\dx=\ax \dy=\ay
\advance\bx by-1
\advance\dy by1
\advance\cx by-1
\advance\cy by1
\psline (\dx,\dy)(\ax,\ay)(\bx,\by)
\rput(\number\cx.5,
\ifnum\cy=0 -0.5\else\number\cy.5\fi){#3}
}
\def\colnum[#1,#2]{\rput[b](#1.5,.2){$#2$}}
\def\rownum[#1,#2]{\rput[r](-.2,-#1.5){$#2$}}
\def\UP{\rput(0,-.3){\psline[linewidth=1.5pt,arrowsize=.5, arrowlength=.6]{->}(0, .6)}}
\def\LT{\rput(.3,0){\psline[linewidth=1.5pt,arrowsize=.5, arrowlength=.6]{->}(-.6, 0)}}
\def\DT{\pscircle*{.2}}
\begin{document}

\title{Combinatorics on permutation tableaux of type A and type B}

\author{Sylvie Corteel and Jang Soo Kim}

\date{\today}

\begin{abstract}
  We give two bijective proofs of a result of Corteel and Nadeau. We find a
  generating function related to unrestricted columns of permutation
  tableaux. As a consequence, we obtain a sign-imbalance formula for permutation
  tableaux. We extend the first bijection of Corteel and Nadeau between
  permutations and permutation tableaux to type B objects. Using this type B
  bijection, we generalize a result of Lam and Williams.  We prove that the
  bijection of Corteel and Nadeau and our type B bijection can be expressed as
  zigzag maps on the alternative representation.
\end{abstract}

\maketitle

\section{Introduction}

A permutation tableau is a relatively new combinatorial object coming from
Postnikov's study of totally nonnegative Grassmanian \cite{Postnikov}.  As its
name suggests, permutation tableaux are in bijection with permutations.
Surprisingly, there is also a connection between permutation tableaux and a
statistical physics model called partially asymmetric exclusion process (PASEP),
see \cite{Corteel2007, Corteel2007a, Corteel2007b}.  Recently, several papers on
the combinatorics of permutation tableaux have been published, see
\cite{Burstein2007, Corteel2009, Nadeau, Steingrimsson2007, Viennot}.  In this
paper we study in more detail the combinatorics of permutation tableaux of type
A and type B.

A \emph{permutation tableau} (of type A) is a Ferrers diagram with possibly
empty rows together with a $0,1$-filling of the cells satisfying the following
conditions:
\begin{enumerate}
\item each column has at least one $1$,
\item there is no $0$ which has a $1$ above it in the same column and a $1$ to
  the left of it in the same row.
\end{enumerate}
See Figure~\ref{fig:permalt} for an example of a permutation tableau.  The
\emph{length} of a permutation tableau is defined to be the number of rows plus
the number of columns.  We denote by $\PT(n)$ the set of permutation tableaux of
length $n$.  A $0$ of a permutation tableau is called \emph{restricted} if there
is a $1$ above it in the same column. If a row has no restricted $0$, then it is
called \emph{unrestricted}.

Let $(x)_n$ denote the rising factorial, i.e.~$(x)_0=1$ and
  $(x)_n=x(x+1)\cdots(x+n-1)$ for $n\geq1$. Note that $(2)_{n-1}=(1)_n=n!$ for
  $n\geq1$.

  Using recurrence relations, Corteel and Nadeau \cite[Proposition
  1]{Corteel2009} proved the following:
\begin{equation}
  \label{eq:8}
  \sum_{T\in\PT(n)} x^{\urr(T)-1} y^{\topone(T)} = (x+y)_{n-1},
\end{equation}
where $\urr(T)$ is the number of unrestricted rows of $T$ and $\topone(T)$ is
the number of $1$'s in the first row. In this paper we give two bijective proofs
of \eqref{eq:8}.

Similarly to the definition of unrestricted row, a column of a permutation
tableau is called \emph{unrestricted} if it does not have a $0$ which has a $1$
to the left of it in the same row. For $T\in\PT(n)$, let $\urc(T)$ denote the
number of unrestricted columns of $T$. Then we prove the following.

\begin{thm}\label{thm:gf}
We have
$$\sum_{n\geq0} \sum_{T\in \PT(n)} t^{\urc(T)} x^n = \frac{1+E_t(x)}{1+(t-1)x E_t(x)},$$  
where 
$$E_t(x)=\sum_{n\geq1} n (t)_{n-1} x^n.$$
\end{thm}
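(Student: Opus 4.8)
The plan is to first reduce the stated identity to a much simpler equivalent form. Using the recursion $(t)_n=(t+n-1)(t)_{n-1}$ one checks the two identities
$$1+E_t(x)=\bigl(1-(t-1)x\bigr)A(t,x),\qquad 1+(t-1)xE_t(x)=\bigl(1-(t-1)x\bigr)\bigl(1+(t-1)xA(t,x)\bigr),$$
where $A(t,x):=\sum_{n\ge 0}(t)_n x^n$. Cancelling the common factor, the Theorem is equivalent to
$$F(t,x):=\sum_{n\ge 0}\sum_{T\in\PT(n)}t^{\urc(T)}x^n=\frac{A(t,x)}{1+(t-1)xA(t,x)},$$
i.e.\ to the functional equation $A=F+(t-1)x\,A\,F$, i.e.\ to $1/F=1/A+(t-1)x$. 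Moreover, specializing \eqref{eq:8} at $y=0$ gives $(t)_{n-1}=\sum_{T\in\PT(n),\ \topone(T)=0}t^{\urr(T)-1}$, so $xA(t,x)=\sum_{n\ge 1}\sum_{T\in\PT(n),\ \topone(T)=0}t^{\urr(T)-1}x^n$; in fact adding or removing the forced all-zero top row is a bijection between $\{T\in\PT(n):\topone(T)=0\}$ and $\PT(n-1)$ that sends $\urr$ to $\urr+1$, which makes this interpretation transparent. Thus everything reduces to a functional equation relating permutation tableaux counted by $\urc$ to permutation tableaux with $\topone=0$ counted by $\urr$.

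To prove the functional equation $A=F+(t-1)x\,A\,F$, equivalently $F=\sum_{k\ge 0}(1-t)^k x^k A^{k+1}$, I would use the reversible operation of deleting the leftmost column of a $T\in\PT(n)$ that has at least one column. The leftmost column is always unrestricted, its deletion turns rows of length one into empty rows and produces a valid $T'\in\PT(n-1)$, and the column-free tableaux ($n$ empty rows, contributing $1/(1-x)$ with weight $t^0$) form the base case. Conversely, prepending a new leftmost column to $T'$: the new column has some height determined by how many empty rows of $T'$ one chooses to extend, may carry a $1$ only in rows of $T'$ that are unrestricted (the extended empty rows being automatically unrestricted), and must carry at least one $1$; moreover inserting it converts a formerly unrestricted column $C_j$ of $T'$ into a restricted one exactly when the new column has a $1$ opposite one of the leading zeros of $C_j$. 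Summing $t^{\urc(T)}$ over all admissible heights and all admissible $0/1$-fillings of the new column — the fillings being governed by counts of the form $2^{\gamma+e}-1$ where $\gamma=\urr(T')-(\#\text{empty rows})$ — together with an inclusion–exclusion over which of the previously unrestricted columns get flipped, should collapse to the recursive weight $(t-1)x\,A(t,x)$ multiplying the contribution of $T'$, yielding the functional equation.

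The main obstacle is exactly the coupled bookkeeping in the previous paragraph: whether inserting the new column flips a given column $C_j$ of $T'$ depends on which rows of $T'$ receive a $1$ in the new column, and which rows are \emph{available} to receive a $1$ depends on the leading-zero patterns of all the columns at once, so the "flip" events are not independent and the naive product/binomial collapse must be justified by a finer structural analysis of permutation tableaux. The cleanest way to carry this out is on the alternative (arrow) representation used later in the paper, where an unrestricted column is simply a column carrying no horizontal arrow and the local insertion rules are explicit; this is also in keeping with the paper's emphasis on zigzag maps. An alternative, which I would keep in reserve, is to transport the statistic $\urc$ through the Corteel--Nadeau bijection to an explicit permutation statistic and to read off $F(t,x)$ from the classical enumeration of permutations by number of cycles — which is precisely what $A(t,x)=\sum_{n\ge 0}(t)_n x^n$ encodes. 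In either approach, once $A=F+(t-1)x\,A\,F$ is established, the Theorem follows from the algebraic identities of the first paragraph.
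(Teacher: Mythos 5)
Your opening reduction is correct and genuinely nice: the identities $1+E_t(x)=(1-(t-1)x)A(t,x)$ and $1+(t-1)xE_t(x)=(1-(t-1)x)(1+(t-1)xA(t,x))$ with $A(t,x)=\sum_{n\ge0}(t)_nx^n$ check out via $(t)_n=(t+n-1)(t)_{n-1}$, and setting $y=0$ (equivalently $y=t$, $x=0$ shifts) in Theorem~\ref{thm:CN} does give $A(t,x)=\sum_{n\ge0}\sum_{T\in\PT(n)}t^{\urr(T)}x^n$ via the add/remove-a-zero-top-row bijection. So you have correctly reduced the theorem to the functional equation $A=F+(t-1)xAF$, which is exactly the paper's equation \eqref{eq:2} rewritten. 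But the proof stops there. The combinatorial argument you sketch for $A=F+(t-1)xAF$ --- peel off the leftmost column, track how $\urc$ changes, and hope an inclusion--exclusion over ``flipped'' columns collapses --- is not carried out, and you yourself flag the obstruction (the flip events are coupled through the leading-zero patterns of all columns simultaneously). Beyond the admitted bookkeeping problem, the shape of your proposed recursion does not match the shape of the identity: unrolling $F=A-(t-1)xAF$ gives $F=\sum_{k\ge0}(1-t)^kx^kA^{k+1}$, whose $k=0$ term is $A$, the generating function of \emph{all} tableaux by $t^{\urr}$; a leftmost-column-peeling recursion has base case the column-free tableaux, contributing only $1/(1-x)$. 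Nothing in your setup explains why the $\urr$-weighted series $A$ should appear as the leading term of a $\urc$-weighted recursion, and that is precisely the content of the theorem.

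For comparison, the paper closes this gap with a different device: it specializes $t$ to a positive integer and realizes $t^{\urc(T)}$ as the number of tableaux obtained from $T$ by stacking $t-1$ extra rows on top and freely refilling the cells above each unrestricted column. This converts the weighted sum $F$ into an unweighted count of tableaux in $\PT(n+t-1)$ characterized by three conditions on unrestricted rows and on removability of the added rows; Lemma~\ref{lem:topone} then translates conditions (1) and (2) into conditions on $\CN$-images, counted by $n(t)_{n-1}$, and the difference between ``(1),(2)'' and ``(1),(2),(3)'' is accounted for by cutting the tableau at the leftmost zero column of the reduced part, which is where the factor $(t-1)xE_t(x)P_t(x)$ comes from. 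If you want to complete your writeup, you either need to supply that finer structural analysis you defer to (and I do not see how the leftmost-column deletion gets you there), or adopt something like the paper's positive-integer-$t$ encoding of $t^{\urc}$; your ``reserve'' plan of transporting $\urc$ through $\CN$ to an explicit permutation statistic is also not something the paper does directly and would itself require a new idea.
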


There are two interesting special cases $t=2$ and $t=-1$ of Theorem~\ref{thm:gf}
as follows. 

\begin{cor}\label{cor:connected}
  Let $f(n)$ denote the number of connected permutations of $\{1,2,\dots,n\}$,
  see Section~\ref{sec:unrestricted} for the definition. Then
$$\sum_{T\in \PT(n)} 2^{\urc(T)} =f(n+1).$$
\end{cor}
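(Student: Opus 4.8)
The plan is to deduce the corollary from Theorem~\ref{thm:gf} by specializing at $t=2$ and matching the resulting rational function with the classical generating function for connected (indecomposable) permutations. Setting $t=2$ and recalling $(2)_{n-1}=n!$, we have $E_2(x)=\sum_{n\ge1}n\cdot n!\,x^n$ and, since $t-1=1$,
$$\sum_{n\ge0}\sum_{T\in\PT(n)}2^{\urc(T)}x^n=\frac{1+E_2(x)}{1+xE_2(x)},$$
so it suffices to identify the right-hand side.

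First I would rewrite $E_2(x)$ in closed form in terms of the ordinary generating function $P(x)=\sum_{n\ge0}n!\,x^n$ of all permutations. Using $n\cdot n!=(n+1)!-n!$ and shifting indices gives $E_2(x)=\tfrac1x\bigl(P(x)-1\bigr)-P(x)$, whence a short computation yields $1+E_2(x)=\tfrac{1-x}{x}\bigl(P(x)-1\bigr)$ and $1+xE_2(x)=(1-x)P(x)$; hence
$$\frac{1+E_2(x)}{1+xE_2(x)}=\frac{P(x)-1}{x\,P(x)}.$$
Next I would invoke the standard fact that every permutation decomposes uniquely as a direct sum of connected permutations, so that $P(x)=1/\bigl(1-C(x)\bigr)$, where $C(x)=\sum_{n\ge1}f(n)x^n$; equivalently $C(x)=\bigl(P(x)-1\bigr)/P(x)$. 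Combining the two displays,
$$\sum_{n\ge0}\sum_{T\in\PT(n)}2^{\urc(T)}x^n=\frac{C(x)}{x}=\sum_{n\ge0}f(n+1)x^n,$$
and extracting the coefficient of $x^n$ proves the corollary.

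There is no serious obstacle once Theorem~\ref{thm:gf} is available: the argument is a substitution followed by routine power-series algebra and an appeal to a well-known enumerative identity (the sequence $f(n)=1,1,3,13,71,\dots$). The only points requiring a little care are the index shifts in deriving the closed form for $E_2(x)$---in particular keeping track of the low-order terms of $P(x)$---and checking that the notion of connected permutation set up in Section~\ref{sec:unrestricted} is indeed the one for which $P(x)=1/(1-C(x))$ holds. Should a self-contained or bijective proof be preferred instead, one could try to match $\sum_{T\in\PT(n)}2^{\urc(T)}$ tableau by tableau with connected permutations of $\{1,\dots,n+1\}$---for instance by choosing a subset of the unrestricted columns of $T$---but the generating-function route above is by far the most economical.
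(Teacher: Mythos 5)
Your proposal is correct and follows essentially the same route as the paper: specialize Theorem~\ref{thm:gf} at $t=2$, simplify the rational function to $\frac{1}{x}\bigl(1-\tfrac{1}{\sum_{n\ge0}n!\,x^n}\bigr)=\frac{P(x)-1}{x\,P(x)}$, and invoke the classical identity $\sum_{n\ge1}f(n)x^n=1-\bigl(\sum_{n\ge0}n!\,x^n\bigr)^{-1}$ for connected permutations (the paper cites Stanley for this rather than rederiving it from the direct-sum decomposition, but that is the same fact). The paper additionally supplies a separate combinatorial proof via shift-connected permutations, which is close in spirit to the bijective alternative you sketch at the end.
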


In Section~\ref{sec:unrestricted} we also give a combinatorial proof of
Corollary~\ref{cor:connected}.

We define the \emph{sign} of $T\in\PT(n)$ to be $\sgn(T)=(-1)^{\urc(T)}$.
When $t=-1$ in Theorem~\ref{thm:gf}, we get the following sign-imbalance formula
for permutation tableaux.

\begin{cor}\label{cor:sign}
  We have
$$\sum_{T\in\PT(n)}\sgn(T) =\frac{(1+i)^n + (1-i)^n}{2}.$$
\end{cor}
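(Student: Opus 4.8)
The plan is to specialize Theorem~\ref{thm:gf} at $t=-1$ and extract coefficients. First I would compute $E_{-1}(x)$. Since $(-1)_{n-1} = (-1)(0)(1)\cdots(n-3)$ for $n\geq 1$, the factor $(-1)_{n-1}$ vanishes for all $n\geq 3$ because it contains the factor $0$; we have $(-1)_0 = 1$ and $(-1)_1 = -1$. Hence $E_{-1}(x) = 1\cdot(-1)_0\cdot x + 2\cdot(-1)_1\cdot x^2 = x - 2x^2$. Substituting $t=-1$ into the generating function gives
$$\sum_{n\geq0}\sum_{T\in\PT(n)}\sgn(T)\,x^n = \frac{1+E_{-1}(x)}{1+(t-1)xE_{-1}(x)}\Big|_{t=-1} = \frac{1+x-2x^2}{1-2x(x-2x^2)} = \frac{1+x-2x^2}{1-2x^2+4x^3}.$$

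Next I would simplify this rational function. Factor the numerator as $1+x-2x^2 = (1-x)(1+2x)$, and observe that $1-2x^2+4x^3 = (1+2x)(1-2x+2x^2)$ — this factorization is the key algebraic step; one checks $(1+2x)(1-2x+2x^2) = 1-2x+2x^2+2x-4x^2+4x^3 = 1-2x^2+4x^3$. Cancelling the common factor $1+2x$ yields
$$\sum_{n\geq0}\sum_{T\in\PT(n)}\sgn(T)\,x^n = \frac{1-x}{1-2x+2x^2}.$$

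Finally I would identify the right-hand side with $\sum_{n\geq0}\frac{(1+i)^n+(1-i)^n}{2}x^n$. The denominator $1-2x+2x^2$ has roots $\frac{1}{1\pm i}$, equivalently it factors as $(1-(1+i)x)(1-(1-i)x)$, since $(1+i)(1-i)=2$ and $(1+i)+(1-i)=2$. By partial fractions, $\frac{1-x}{(1-(1+i)x)(1-(1-i)x)} = \frac{A}{1-(1+i)x} + \frac{B}{1-(1-i)x}$ with $A = \frac{1-x}{1-(1-i)x}\big|_{x=1/(1+i)}$ and similarly for $B$; a short computation gives $A=B=\tfrac12$. Therefore the coefficient of $x^n$ is $\tfrac12(1+i)^n + \tfrac12(1-i)^n$, which is exactly the claimed formula. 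The only real obstacle is spotting the cancellation of the factor $(1+2x)$ and the clean factorization of the cubic denominator; everything else is routine coefficient extraction. (One can also verify the identity directly by checking $(1-x) = \bigl(\tfrac12(1+i)^n+\tfrac12(1-i)^n\bigr)$-recurrence: the sequence $a_n=\frac{(1+i)^n+(1-i)^n}{2}$ satisfies $a_n = 2a_{n-1}-2a_{n-2}$ with $a_0=1$, $a_1=1$, matching the recurrence encoded by $1-2x+2x^2$ against numerator $1-x$.)
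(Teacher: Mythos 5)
Your proposal is correct and follows essentially the same route as the paper: specialize Theorem~\ref{thm:gf} at $t=-1$ to get $E_{-1}(x)=x-2x^2$, simplify $\frac{1+x-2x^2}{1-2x^2+4x^3}$ to $\frac{1-x}{1-2x+2x^2}$ by cancelling $1+2x$, and then extract coefficients via the partial-fraction decomposition $\frac12\bigl(\frac{1}{1-(1+i)x}+\frac{1}{1-(1-i)x}\bigr)$. All the computations check out.
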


We also consider the set $\PTB(n)$ of type B permutation tableaux of length
$n$ defined by Lam and Williams \cite{Lam2008a}. Using a similar argument in
\cite{Corteel2009}, Lam and Williams \cite[Proposition 11.4]{Lam2008a} showed
the following (see Section~\ref{sec:typeb} for the definition of $\urr(T)$ and
$\diag(T)$):
\begin{equation}
  \label{eq:9}
  \sum_{T\in\PTB(n)} x^{\urr(T)-1} z^{\diag(T)} = (1+z)^n (x+1)_{n-1}.
\end{equation}

In this paper we find a bijection $\CNB$ between the type B permutation
tableaux and the type B permutations, which extends the first bijection, say
$\CN$, in \cite{Corteel2009}. Using this bijection we prove the following
generalization of \eqref{eq:9}. See Section~\ref{sec:typeb} for the definition
of $\urr(T),\diag(T)$ and $\toponez(T)$.
\begin{thm}\label{thm:typeb}
We have
$$ \sum_{T\in\PTB(n)} x^{\urr(T)-1} y^{\toponez(T)}z^{\diag(T)} = (1+z)^n (x+y)_{n-1}.$$
\end{thm}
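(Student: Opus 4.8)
The strategy is to establish a bijection $\CNB\colon\PTB(n)\to B_n$ (the hyperoctahedral group) that refines $\CN$ in type A, and then to track the three statistics $\urr,\toponez,\diag$ through this bijection so that the left-hand side becomes a sum over type B permutations of a monomial in $x,y,z$ that visibly equals $(1+z)^n(x+y)_{n-1}$. Concretely, I would first recall the type A bijection $\CN$ of Corteel--Nadeau and identify, for a permutation tableau $T$ of length $n$, which features of $\CN(T)$ record $\urr(T)$ and $\topone(T)$; the natural guess is that unrestricted rows correspond to right-to-left minima (or the ``zigzag'' description alluded to in the abstract) and that $\topone(T)$ counts a distinguished subset of them. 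Then I would build $\CNB$ by reading the type B tableau as a symmetric (or antisymmetric, depending on the diagonal cell labels) $0,1$-filling and applying the type A construction to the ``folded'' data, with the diagonal cells contributing the sign information $\diag(T)$.

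**Key steps, in order.** (1) State precisely the definitions of $\urr(T)$, $\diag(T)$, $\toponez(T)$ for $T\in\PTB(n)$ and recall the definition of $\PTB(n)$ from Lam--Williams. (2) Describe $\CN$ combinatorially and prove (or cite from \cite{Corteel2009}) that it transports $x^{\urr(T)-1}y^{\topone(T)}$ to the appropriate weight on $S_{n-1}$, so that \eqref{eq:8} is recovered as $\sum_{\sigma}x^{a(\sigma)}y^{b(\sigma)}=(x+y)_{n-1}$ for suitable statistics $a,b$. (3) Define $\CNB$ on $\PTB(n)$ and prove it is a bijection onto $B_n$ by exhibiting the inverse map explicitly. (4) Show $\diag(T)$ becomes the number of negative entries (or of sign changes) under $\CNB$, contributing the factor $(1+z)^n$, while $\urr(T)$ and $\toponez(T)$ reduce to the type A statistics on the underlying $S_{n-1}$-part, contributing $(x+y)_{n-1}$; since these two contributions are carried by independent pieces of the type B permutation, the generating function factors as claimed. (5) Conclude by summing the monomial $z^{\#\mathrm{neg}}x^{a}y^{b}$ over all of $B_n$.

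**Main obstacle.** The hard part will be step (3)--(4): getting the type B bijection to simultaneously behave correctly on all three statistics. In type A the interplay of $\urr$ and $\topone$ already requires care (the first row is special), and in type B the diagonal cells of a permutation tableau play a double role — they are part of the Ferrers shape and they carry the sign data — so the ``folding'' that produces $\CNB$ must be set up so that $\diag(T)$ decouples cleanly from the row statistics. I expect the cleanest route is to define $\CNB$ recursively or via the zigzag description, reducing to the type A case after stripping off the diagonal, and then to verify by a short induction on $n$ (or on the number of columns) that each added row/diagonal cell multiplies the weight by exactly the factor predicted by $(1+z)^n(x+y)_{n-1}$. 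Once the statistic-tracking lemma is in place, the identity itself is immediate, since $(1+z)^n(x+y)_{n-1}=\sum_{S\subseteq[n]}z^{|S|}\sum_{\sigma}x^{a(\sigma)}y^{b(\sigma)}$ matches the sum over $B_n$ term by term.
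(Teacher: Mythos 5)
Your plan follows essentially the same route as the paper's proof (specifically its second proof): define $\CNB$, establish bijectivity via an explicit inverse, prove a statistic-tracking lemma showing $\diag(T)=\neg(\CNB(T))$ while $\urr$ and $\toponez$ become counts of right-to-left extrema, and then factor the sum over $B_n$ as $(1+z)^n(x+y)_{n-1}$ by a cycle-counting argument. One detail to fix when you make the key lemma precise: $\topone(T)$ in type A is the number of RL-maxima of the prefix of $\CN(T)$ preceding $1$ (not a subset of the RL-minima), and in type B, writing $\CNB(T)=\sigma\tau m\rho$ with $m=\min(\pi)$, the correct statements are $\toponez(T)=\RLmax(\sigma)+\RLmin(\tau)$ and $\urr(T)-1=\RLmin(\rho)$, after which the claimed decoupling of the sign data from the cycle data does go through.
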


There is another representation of a permutation tableau which we call the
\emph{alternative representation}. It was first introduced by Viennot
\cite{Viennota} and systematically studied by Nadeau \cite{Nadeau}.  Nadeau
\cite{Nadeau} found a seemingly different bijection between the permutation
tableaux and the permutations using alternative forests. He also showed that
despite their different descriptions his bijection is, in fact, the same as the
bijection $\CN$ of Corteel and Nadeau \cite{Corteel2009}. We show that this
bijection can also be described as a zigzag map on the alternative
representation similar to the zigzag map on permutation tableaux in
\cite{Steingrimsson2007}. More generally, we show that our map $\CNB$ can be
described as a zigzag on the alternative representation for type B permutation
tableaux.

The rest of this paper is organized as follows. In Section~\ref{sec:bijective}
we recall the bijection $\CN$ and some properties of it. Then we give two
bijective proofs of \eqref{eq:8}. In Section~\ref{sec:unrestricted} we prove
Theorem~\ref{thm:gf} and Corollaries~\ref{cor:connected} and \ref{cor:sign}. In
Section~\ref{sec:typeb} we define the bijection $\CNB$ and prove
Theorem~\ref{thm:typeb}. In Section~\ref{sec:zigzag} we describe $\CN$ and
$\CNB$ in terms of zigzag maps on the alternative representation. In
Section~\ref{sec:further-study} we suggest some open problems.

\section{Two bijective proofs of Corteel and Nadeau's
  theorem}\label{sec:bijective}

A \emph{Ferrers diagram} is a left-justified arrangement of square cells with
possibly empty rows and columns. The \emph{length} of a Ferrers diagram is the
sum of the number of rows and the number of columns. If a Ferrers diagram is of
length $n$, then we label the steps in the south-east border with $1,2,\ldots,n$
from north-east to south-west. We label a row (resp.~column) with $i$ if the row
(resp.~column) contains the south (resp.~west) step labeled with $i$, see
Figure~\ref{fig:ferres}. We denote the row (resp.~column) labeled with $i$ by
Row $i$ (resp.~Column $i$). The \emph{$(i,j)$-entry} is the cell in Row $i$ and
Column $j$.

\begin{figure}
  \centering
  \begin{pspicture}(-1,1)(6,-4) 
\psline(0,-4)(0,0)(6,0)
\rownum[0,3] \rownum[1,5] \rownum[2,8] \rownum[3,10]
\colnum[0,9] \colnum[1,7] \colnum[2,6] \colnum[3,4] \colnum[4,2] \colnum[5,1]
\cell(1,1)[] \cell(1,2)[] \cell(1,3)[] \cell(1,4)[] \cell(2,1)[] \cell(2,2)[] \cell(2,3)[] \cell(3,1)[]     
  \end{pspicture}
\caption{A Ferrers diagram with labeled rows and columns.}
  \label{fig:ferres}
\end{figure}

For a permutation tableau $T$, the \emph{alternative representation} of $T$ is
the diagram obtained from $T$ by replacing the topmost $1$'s with $\uparrow$'s,
the rightmost restricted $0$'s with $\leftarrow$'s and removing the remaining
$0$'s and $1$'s, see Figure~\ref{fig:permalt}. Here, by a topmost $1$ we mean a
$1$ which is the topmost $1$ in its column. A rightmost restricted $0$ is
similar.  Note that we can recover the permutation tableau $T$ from its
alternative representation as follows. We fill each empty cell with a $0$ if
there is an arrow pointing to it and with a $1$ otherwise. And then we replace
each $\uparrow$ with a $1$ and each $\leftarrow$ with a $0$.  It is easy to see
that in the alternative representation of $T$ there is no arrow pointing to
another arrow and each column has exactly one $\uparrow$. Conversely, a filling
of a Ferrers diagram with $\uparrow$'s and $\leftarrow$'s satisfies these
conditions if and only if it is the alternative representation of a permutation
tableau.

\begin{remark}
  Our definition of the alternative representation comes from alternative
  tableaux introduced by Viennot \cite{Viennot}. An alternative tableau is
  obtained by deleting the first row of the alternative representation of a
  permutation tableau. Alternative tableaux have the nice symmetric property for
  rows and columns. Rather than using the term alternative tableau, we use the
  term alternative representation to emphasize that we consider it as a
  permutation tableau. See \cite{Nadeau} for more information on alternative
  tableaux.
\end{remark}

\begin{figure}
  \centering
  \begin{pspicture}(-1,1)(6,-7) 
\psline(0,-7)(0,0)(6,0)
\cell(1,1)[0] \cell(1,2)[0] \cell(1,3)[1]
    \cell(1,4)[0] \cell(1,5)[0] \cell(1,6)[1] \cell(2,1)[0] \cell(2,2)[0]
    \cell(2,3)[0] \cell(2,4)[1] \cell(2,5)[1] \cell(2,6)[1] \cell(3,1)[0]
    \cell(3,2)[0] \cell(3,3)[0] \cell(3,4)[0] \cell(3,5)[1] \cell(4,1)[0]
    \cell(4,2)[1] \cell(4,3)[1] \cell(5,1)[1] \cell(6,1)[1] 
    \colnum[0,12] \colnum[1,9] \colnum[2,8] \colnum[3,6] \colnum[4,5]
    \colnum[5,3] \rownum[0,1] \rownum[1,2] \rownum[2,4] \rownum[3,7]
    \rownum[4,10] \rownum[5,11] \rownum[6,13]
\end{pspicture}\hfill
  \begin{pspicture}(-1,1)(6,-7) 
\psline(0,-7)(0,0)(6,0)
\cell(1,1)[] \cell(1,2)[] \cell(1,3)[\UP] \cell(1,4)[] \cell(1,5)[] \cell(1,6)[\UP] \cell(2,1)[] \cell(2,2)[] \cell(2,3)[\LT] \cell(2,4)[\UP] \cell(2,5)[\UP] \cell(2,6)[] \cell(3,1)[] \cell(3,2)[] \cell(3,3)[] \cell(3,4)[\LT] \cell(3,5)[] \cell(4,1)[] \cell(4,2)[\UP] \cell(4,3)[] \cell(5,1)[\UP] \cell(6,1)[] 
   \colnum[0,12] \colnum[1,9] \colnum[2,8] \colnum[3,6] \colnum[4,5]
    \colnum[5,3] \rownum[0,1] \rownum[1,2] \rownum[2,4] \rownum[3,7]
    \rownum[4,10] \rownum[5,11] \rownum[6,13]
\end{pspicture}\hfill
\caption{A permutation tableau $T$ (left) and the alternative representation of
  $T$ (right).}\label{fig:permalt}
\end{figure}

We denote by $S_n$ the set of permutations of $[n]=\{1,2,\ldots,n\}$. 

In \cite{Corteel2009}, Corteel and Nadeau found two bijections between
permutation tableaux and permutations. We recall their first bijection
$\CN:\PT(n)\to S_n$. Here we use the alternative representation.

Given $T\in\PT(n)$, we first set $\pi$ to be the word of labels of the
unrestricted rows of $T$ arranged in increasing order. For each column, say
Column $i$, of $T$, starting from the leftmost column and proceeding to the
right, if Row $j$ contains a $\uparrow$ in Column $i$ and $i_1<\cdots<i_r$ are
the labels of the rows containing a $\leftarrow$ in Column $i$, we add
$i_1,\ldots,i_r,i$ in this (increasing) order before $j$ in $\pi$. Then $\CN(T)$
is the resulting permutation $\pi$.  For example, if $T$ is the permutation
tableau in Figure~\ref{fig:permalt}, then
\begin{equation}
  \label{eq:6}
\CN(T)=4, 6, 5, 2, 8, 3, 1, 9, 7, 12, 10, 11, 13.
\end{equation}
Corteel and Nadeau \cite{Corteel2009} proved that $\CN:\PT(n)\to S_n$ is a
bijection by providing the inverse map.

For now and later use we define the following terminologies.

For a word $w=w_1\ldots w_n$ of distinct integers, $w_i$ is called a
\emph{right-to-left maximum}, or a \emph{RL-maximum}, if $w_i>w_j$ for all
$j\in\{i+1,i+2,\ldots,n\}$. Similarly, $w_i$ is called a \emph{right-to-left
  minimum}, or a \emph{RL-minimum}, if $w_i<w_j$ for all
$j\in\{i+1,i+2,\ldots,n\}$.  Let $w_{i_1},\ldots,w_{i_k}$ be the RL-maxima of
$w$ with $i_1<i_2<\cdots<i_k$. Note that $i_k=n$. By \emph{the cycles of $w$
  according to RL-maxima}, we mean the cycles
\[
(w_1,\ldots,w_{i_1}), (w_{i_1+1},\ldots,w_{i_2}), \ldots, (w_{i_{k-1}+1},\ldots,w_{i_k}).
\]
We also define \emph{the cycles of $w$ according to RL-minima} in the same way.

Corteel and Nadeau \cite[Theorem 1]{Corteel2009} proved the first item and Nadeau
\cite[Theorem 5.4]{Nadeau} proved the second item in the following lemma.

\begin{lem}\label{lem:topone}
For $T\in\PT(n)$ and $\pi\in S_n$ with $\CN(T)=\pi$, we have the following. 
  \begin{enumerate}
  \item The labels of the unrestricted rows of $T$ are exactly the RL-minima of
    $\pi$. 
  \item The labels of the columns containing a $1$ in the first row of $T$ are
    exactly the RL-maxima of the subword of $\pi$ which is to the left of the
    $1$ in $\pi$.
\end{enumerate}
\end{lem}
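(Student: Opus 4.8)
The plan is to analyze the structure of the map $\CN$ directly from its recursive column-by-column definition, tracking what happens to RL-minima and RL-maxima at each insertion step. Let me set up the induction on the number of columns processed.

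For part (1): Initially $\pi$ is the increasing sequence of labels of unrestricted rows, so every one of these labels is an RL-minimum (indeed $\pi$ is increasing, so all entries are RL-minima at this stage). The key observation is that each insertion step inserts a block $i_1, \ldots, i_r, i$ immediately before some entry $j$, where $j$ is the label of the row with a $\uparrow$ in the current column, $i$ is the column label, and $i_1 < \cdots < i_r$ are labels of rows with a $\leftarrow$ in that column. First I would argue that $j > i_r$ and more: since Row $j$ has a topmost $1$ in a column to the \emph{right} of (or equal to) where the $\leftarrow$'s sit, and by the Ferrers shape, the column label $i$ satisfies $i < j$ while $i > i_1, \ldots, i_r$ (the $\leftarrow$ rows are above Row $j$, hence have smaller labels in the border labeling — wait, I must be careful with the labeling convention; I would verify using Figure~\ref{fig:ferres} that rows higher up have smaller labels and the column label $i$ lies between the $\leftarrow$-row labels and the $\uparrow$-row label $j$). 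The decisive point is that inserting $i_1, \ldots, i_r, i$ just before $j$ destroys the RL-minimum status of exactly those previously-present entries that are larger than $j$ but... no: inserting entries \emph{before} $j$ cannot change which entries to the right of the insertion point are RL-minima, and the newly inserted entries $i_1, \ldots, i_r, i$ are all $< j$ hence none of them is an RL-minimum (since $j$ sits to their right at this stage, and $j$ survives as the minimum of its suffix because everything inserted later goes even further left or into earlier columns — here I need that later insertions happen strictly to the left or leave the suffix of $j$ untouched). The cleanest route: observe that after processing all columns, an entry is an RL-minimum of $\CN(T)$ iff it is a label of an unrestricted row, because (a) the unrestricted row labels appear in increasing order and nothing ever gets inserted to their right that is smaller — a column-label $i$ inserted before row-label $j$ has $i < j$, but $i$ is not smaller than the unrestricted-row labels to its left — and (b) every column label and every $\leftarrow$-row label fails to be an RL-minimum because the $\uparrow$-row label $j$ to its right is smaller. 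I would make this precise by induction, showing the invariant ``the RL-minima of the current word are exactly the unrestricted-row labels'' is preserved by each insertion.

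For part (2): This is the harder part and the main obstacle. Here $y$-statistic $\topone(T)$ counts columns with a $1$ in the first row, i.e.\ columns Row $1$ passes through with a topmost $1$ in Row $1$ — equivalently, in the alternative representation, the cell $(1, j)$ contains $\uparrow$. In the map $\CN$, the value $1$ (label of the first row, which is always unrestricted) starts as the leftmost entry of the initial increasing word. When we process a column $j$ such that the $\uparrow$ in Column $j$ is in Row $1$, we insert $i_1, \ldots, i_r, j$ immediately before the entry $1$. So the entries that end up to the \emph{left} of $1$ in $\CN(T)$ are precisely the entries inserted via columns whose $\uparrow$-row is Row $1$, namely such column labels $j$ together with the corresponding $\leftarrow$-row labels. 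I would then show: among these, the RL-maxima of the subword left of $1$ are exactly the column labels $j$ with $\uparrow$ in Row $1$. The reason: when column $j$ is processed, it is inserted as the \emph{last} entry of its block $i_1, \ldots, i_r, j$ right before the current leftmost portion ending at $1$; the $\leftarrow$-labels $i_1, \ldots, i_r$ are all $< j$; and since columns are processed left to right, a later such column $j' > j$? — not necessarily $j' > j$, so I need a genuine argument. The right statement: a $\leftarrow$-row label $i_s$ in column $j$ can never be an RL-maximum of the left subword because $j$ sits to its right with $j > i_s$; and a column label $j$ \emph{is} an RL-maximum of the left subword iff every entry later inserted to its right (but still left of $1$) is smaller — the entries inserted later and landing left of $1$ come from columns $j''$ processed after $j$ with $\uparrow$ in Row $1$; whether $j$ survives depends on the shape. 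I expect the cleanest formulation uses Lemma part (1) applied to a sub-tableau or uses the known bijectivity of $\CN$ together with a dual/transpose symmetry: the columns with a $1$ in the first row are ``like'' unrestricted rows of a transposed object, and RL-maxima of a prefix correspond to RL-minima of its reverse. I would look for such a symmetry to reduce (2) to (1); failing that, I would push the direct induction through, the obstacle being to correctly identify, for each column with $\uparrow$ in Row $1$, its final relative position among the other such columns in $\CN(T)$ and to check the RL-maximum condition survives all subsequent insertions — this requires knowing that no later insertion places a larger entry to its right within the left subword, which should follow from the Ferrers shape constraint linking column labels to row labels.

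Since both items are already attributed to Corteel--Nadeau and Nadeau in the cited literature, the section will presumably just recall these proofs or cite them; but the self-contained argument above via the ``RL-minima $=$ unrestricted rows'' invariant (part 1) and the prefix-symmetry reduction (part 2) is the route I would take.
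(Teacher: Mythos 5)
The paper does not actually prove this lemma: it is quoted from the literature, with item (1) attributed to \cite[Theorem 1]{Corteel2009} and item (2) to \cite[Theorem 5.4]{Nadeau}, so there is no in-paper argument to measure yours against. Judged as a self-contained proof, your part (1) is essentially workable but you have the key inequality backwards. With the border labeling used here, a cell in Row $j$ and Column $i$ exists only when $j<i$, so the column label $i$ is \emph{larger} than the labels $j,i_1,\dots,i_r$ of every row meeting Column $i$; the inserted block satisfies $j<i_1<\cdots<i_r<i$, not ``all $<j$'' as you write. Happily this is exactly what makes your invariant work: every inserted entry exceeds the entry $j$ sitting immediately to its right, so no inserted entry is ever an RL-minimum, and any entry left of the insertion point that was an RL-minimum (hence already smaller than $j$) stays one. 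With the inequality corrected, your induction for (1) closes.

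Part (2) contains the genuine gap, which you yourself flag. The missing ingredient is a monotonicity statement you never establish: since columns are processed from left to right and the leftmost column carries the \emph{largest} label, every entry inserted after Column $c$ has been processed --- whether a later column label or a row label of a row meeting such a column --- is strictly smaller than $c$. Granting this, the argument is short: each column $c$ with a $\uparrow$ in Row $1$ deposits its block, ending in $c$, immediately before the entry $1$, so everything to the right of $c$ inside the prefix was inserted later and is therefore smaller, making $c$ an RL-maximum; a $\leftarrow$-row label $i_s$ is dominated by the $c$ at the end of its own block; and a column label $c$ whose $\uparrow$ sits in Row $j\neq 1$ lands, if in the prefix at all, immediately before $j$, which was itself deposited by an earlier column $c'>c$ lying to the right of $j$ and still inside the prefix, so such a $c$ is not an RL-maximum. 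Your hoped-for reduction of (2) to (1) via a transpose symmetry is not available (rows and columns of a permutation tableau play genuinely asymmetric roles), so the direct induction with the monotonicity fact is the route to take; without it, your part (2) is an outline rather than a proof.
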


For example, the labels of the columns containing a $1$ in the first row of $T$ in
Figure~\ref{fig:permalt} are $3$ and $8$ which are the RL-maxima of the subword
$4,6,5,2,8,3$ of $\CN(T)$ in \eqref{eq:6}.

Now we give two bijective proofs of the following theorem which is first proved
by Corteel and Nadeau \cite[Proposition 1]{Corteel2009} using a recurrence
relation. In fact, in the first proof we give a bijection is between objects
counted by the values of the polynomial evaluated at positive integers, whereas
in the second proof we give a bijection between objects counted by the
coefficients of the polynomials. 

\begin{thm}\label{thm:CN}
  We have
\[\sum_{T\in\PT(n)} x^{\urr(T)-1} y^{\topone(T)} = (x+y)_{n-1}.\]
\end{thm}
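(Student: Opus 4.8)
The plan is to give two independent bijective proofs of the identity, matching the two combinatorial meanings of the polynomial $(x+y)_{n-1}$.

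\medskip

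\noindent\textbf{First proof (evaluation at positive integers).}
I would fix positive integers $p$ and $q$ and show that the number of pairs $(T,c)$, where $T\in\PT(n)$ and $c$ is a coloring recording a choice of one of $p$ colors for each of the $\urr(T)-1$ "free" unrestricted rows (all unrestricted rows other than, say, the smallest one, i.e.\ Row $1$) together with one of $q$ colors for each $1$ in the first row, equals the number of words counted by $(p+q)_{n-1}$. Via $\CN$, by Lemma~\ref{lem:topone} the unrestricted rows of $T$ become the RL-minima of $\pi=\CN(T)$, and the columns with a $1$ in the first row become the RL-maxima of the prefix of $\pi$ to the left of the entry $1$. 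Since $1$ is always itself an RL-minimum, removing it from the count of RL-minima matches the shift from $\urr(T)$ to $\urr(T)-1$. So the problem becomes: count permutations $\pi\in S_n$ with a $p$-coloring of the RL-minima that come strictly before $1$, a further option... — more cleanly, I would build such a colored permutation one entry at a time, reading $\pi$ from right to left, and argue that at each of the first $n-1$ steps there are exactly $p+q$ choices (a new RL-minimum before $1$ carries $p$ colors, a new RL-maximum before $1$ carries $q$ colors, etc.), with the last entry forced; this yields $(p+q)_{n-1}$. The bijection between these colored permutations and the colored tableaux is then exactly $\CN$ with colors transported along Lemma~\ref{lem:topone}.

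\medskip

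\noindent\textbf{Second proof (coefficient-wise).}
Write $(x+y)_{n-1}=\sum_k c_{n,k}\, x^{k-1} y^{\,?}$—more precisely I would expand $(x+y)_{n-1}=\sum_{j=0}^{n-1}\left[{n-1\atop j}\right] x^j$-type; actually the clean statement is that the coefficient of $x^{a}y^{b}$ in $(x+y)_{n-1}$ counts certain labeled structures (e.g.\ permutations of $[n]$, or sequences, with $a$ marked "left" steps and $b$ marked "right" steps among $n-1$ steps). The goal of the second proof is a bijection $T\mapsto(\pi',S)$ directly matching $\urr(T)-1$ with one statistic and $\topone(T)$ with another on such a structure, so that summing over the structure gives the coefficients of the polynomial. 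Again I would use $\CN$: $T$ goes to $\pi\in S_n$, and then I would read off from $\pi$ the positions that are RL-minima (before $1$) versus RL-maxima of the prefix before $1$, encoding $\pi$ as a word in a $\{x,y\}$-alphabet of length $n-1$ plus residual data that is uniquely determined. The point is that the RL-minima-before-$1$ and the RL-maxima-of-the-prefix jointly behave like an independent left/right choice at each of $n-1$ insertion steps.

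\medskip

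\noindent\textbf{Main obstacle.}
The crux in both proofs is verifying the "independence at each step" claim: that when one constructs $\pi$ incrementally (right to left, or by inserting $1,2,\dots,n$), the event "the current entry is an RL-minimum lying before the eventual position of $1$" and the event "the current entry is an RL-maximum of the prefix before $1$" partition the relevant choices into exactly $p$-many and $q$-many options with no interaction, and that the entry $1$ itself contributes the harmless $-1$ shift. Handling the placement of $1$ correctly—it is simultaneously the global minimum and the left endpoint of the prefix whose RL-maxima we track—is the delicate bookkeeping, and I expect most of the work to lie there; once that is pinned down, both bijections follow by transporting statistics through $\CN$ using Lemma~\ref{lem:topone}.
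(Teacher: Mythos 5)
Your reduction of the tableau statistics to permutation statistics via $\CN$ and Lemma~\ref{lem:topone} is the same first move the paper makes, but the proposal has a genuine gap at its center: the claim that the colored permutations can be built ``one entry at a time, reading $\pi$ from right to left,'' with exactly $p+q$ choices at each of the first $n-1$ steps, is not an argument as stated and does not work in that form. When you place values from right to left, the number of available values at step $k$ is $n-k+1$, not $p+q+k-1$, and whether a placed value ends up being an RL-minimum of the suffix after $1$ or an RL-maximum of the prefix before $1$ is not a free, independent choice at that step. You correctly flag this as ``the crux,'' but the crux is exactly what is missing. (There is also a sign error in the setup: writing $\pi=\sigma 1\tau$, every RL-minimum other than $1$ itself lies strictly \emph{after} $1$, since any entry before $1$ has $1$ to its right; so the $\urr(T)-1$ free unrestricted rows correspond to the RL-minima of $\tau$, not to RL-minima ``strictly before $1$'' as you wrote.)

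The way to close the gap is precisely what the paper does, in either of two ways. Its second proof supplies the missing count via cycles: the cycles of $\tau$ according to RL-minima and the cycles of $\sigma$ according to RL-maxima together form an arbitrary permutation of $\{2,\dots,n\}$ with $i+j$ cycles, together with a free $\binom{i+j}{i}$ choice of which cycles go to which side; combined with $\sum_k c(n-1,k)u^k=(u)_{n-1}$ at $u=x+y$ this gives the identity, and it is only through this cycle decomposition that your ``$p+q$ independent choices per step'' becomes literally true (insert $2,3,\dots,n$ in increasing order, each either starting a new cycle with one of $p+q$ colors or attaching after one of the existing elements). The paper's first proof avoids counting colored permutations altogether: it realizes the $x^{\urr(T)-1}y^{\topone(T)}$ weight geometrically by enlarging $T$ to a tableau of length $N=n+x+y-2$ (adding $y-1$ rows and $x-1$ columns and letting the arrows slide into the new cells), characterizes the resulting tableaux by Lemma~\ref{lem:topone} as those $Q\in\PT(N)$ for which $\CN(Q)$ contains $N,N-1,\dots,N-x+2,1,2,\dots,y$ in that relative order, and counts these as $N!/(x+y-1)!=(x+y)_{n-1}$. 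Your second proof sketch has the same missing ingredient: the coefficient of $x^iy^j$ in $(x+y)_{n-1}$ is $c(n-1,i+j)\binom{i+j}{i}$, not the count of $\{x,y\}$-words of length $n-1$, so the Stirling-cycle structure cannot be bypassed by a simple word encoding.
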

\begin{proof}[First proof of Theorem~\ref{thm:CN}]
  Since both sides of the equation are polynomials in $x$ and $y$, it is
  sufficient to prove this for all positive integers $x$ and $y$.  Assume $x$
  and $y$ are positive integers and let $N=n+x+y-2$.

  Given $T\in\PT(n)$, we construct $T'\in\PT(N)$ as follows. Firstly, we add
  $y-1$ rows and $x-1$ columns to $T$ by adding $y-1$ south steps at the
  beginning and $x-1$ west steps at the end. Secondly, we fill the first entries
  of the leftmost $x-1$ columns with $\uparrow$'s, see Figure~\ref{fig:cons}.

\begin{figure}
\begin{pspicture}(-10,0)(13,-9.5) 
\rput(-10,-3){
\cell(1,1)[] \cell(1,2)[\UP] \cell(1,3)[\UP] \cell(1,4)[] \cell(1,5)[] \cell(1,6)[\UP] \cell(2,1)[] \cell(2,2)[] \cell(2,3)[\LT] \cell(2,4)[\UP] \cell(2,5)[\UP] \cell(3,1)[\UP] \cell(3,2)[] \cell(3,3)[] \cell(4,1)[\LT] \cell(4,2)[] 
\rput(-4,3){\psline[linewidth=3pt](4,-7)(4,-8)
\pspolygon[linewidth=3pt](4,-3)(4,-7)(6,-7)(6,-6)(7,-6)(7,-5)(9,-5)(9,-4)(10,-4)(10,-3)}}
\rput(2,-8.5){\psscaleboxto(4,.5){\rotateleft{\{}}}\rput(2,-9.2){$x-1$}
\rput(10.5,-1.5){\psscaleboxto(.5,3){\}}}\rput(12,-1.5){$y-1$}
\psframe[fillstyle=solid,fillcolor=yellow](5,0)(6,-4)
\psframe[fillstyle=solid,fillcolor=yellow](6,0)(7,-4)
\psframe[fillstyle=solid,fillcolor=yellow](9,0)(10,-4)
\psframe[fillstyle=solid,fillcolor=green](0,-5)(4,-6)
\psframe[fillstyle=solid,fillcolor=green](0,-7)(4,-8)
\psline(0,-8)(0,0)(10,0)
\psline[linewidth=3pt](4,-7)(4,-8)
\pspolygon[linewidth=3pt](4,-3)(4,-7)(6,-7)(6,-6)(7,-6)(7,-5)(9,-5)(9,-4)(10,-4)(10,-3)
\cell(1,1)[\UP] \cell(1,2)[\UP] \cell(1,3)[\UP] \cell(1,4)[\UP] \cell(1,5)[] \cell(1,6)[] \cell(1,7)[] \cell(1,8)[] \cell(1,9)[] \cell(1,10)[] \cell(2,1)[] \cell(2,2)[] \cell(2,3)[] \cell(2,4)[] \cell(2,5)[] \cell(2,6)[] \cell(2,7)[] \cell(2,8)[] \cell(2,9)[] \cell(2,10)[] \cell(3,1)[] \cell(3,2)[] \cell(3,3)[] \cell(3,4)[] \cell(3,5)[] \cell(3,6)[] \cell(3,7)[] \cell(3,8)[] \cell(3,9)[] \cell(3,10)[] \cell(4,1)[] \cell(4,2)[] \cell(4,3)[] \cell(4,4)[] \cell(4,5)[] \cell(4,6)[\UP] \cell(4,7)[\UP] \cell(4,8)[] \cell(4,9)[] \cell(4,10)[\UP] \cell(5,1)[] \cell(5,2)[] \cell(5,3)[] \cell(5,4)[] \cell(5,5)[] \cell(5,6)[] \cell(5,7)[\LT] \cell(5,8)[\UP] \cell(5,9)[\UP] \cell(6,1)[] \cell(6,2)[] \cell(6,3)[] \cell(6,4)[] \cell(6,5)[\UP] \cell(6,6)[] \cell(6,7)[] \cell(7,1)[] \cell(7,2)[] \cell(7,3)[] \cell(7,4)[] \cell(7,5)[\LT] \cell(7,6)[] \cell(8,1)[] \cell(8,2)[] \cell(8,3)[] \cell(8,4)[] 
\end{pspicture}
 \caption{The construction of $T'$ from $T$.}
    \label{fig:cons}
  \end{figure}

  Let $U$ be the set of labels of the columns of $T$ containing a $\uparrow$ in
  the first row and let $L$ be the set of labels of the unrestricted rows of $T$
  except Row $1$. Now consider all permutation tableaux obtained from $T'$ by
  doing the following: (1) for each $u\in U$, move the $\uparrow$ in the
  $(y,u+y-1)$-entry to the $(i,u+y-1)$-entry for some $1\leq i\leq y$ and (2)
  for each $\ell\in L$, put a $\leftarrow$ in the $(\ell+y-1,j)$-entry for some
  $N-x+2\leq j\leq N$ or do nothing. See Figure~\ref{fig:cons}, where the
  possible cells for $\uparrow$'s and $\leftarrow$'s are colored yellow and
  green respectively. Clearly, there are $x^{\urr(T)-1} y^{\topone(T)}$ such
  permutation tableaux for given $T$. Thus the number of permutation tableaux
  obtained in this way for all $T\in\PT(n)$ equals the left hand side of the
  equation in the theorem. Moreover, it is easy to see that such permutation
  tableaux are exactly those in $\PT(N)$ satisfying the following properties.
  \begin{enumerate}
  \item Each integer $1\leq i\leq y$ is the label of an unrestricted row.
  \item Each integer $N-x+2\leq j\leq N$ is the label of a column containing a
    $\uparrow$ in the first row.
 \end{enumerate}

 By Lemma~\ref{lem:topone}, $Q\in\PT(N)$ satisfies the above two conditions if
 and only if in the permutation $\pi=\CN(Q)\in S_N$ the integers
 $N,N-1,\ldots,N-x+2,1,2,\ldots,y$ are arranged in this order. Since the number
 of such permutations is $\frac{(n+x+y-2)!}{(x+y-1)!}=(x+y)_{n-1}$, we are done.
\end{proof}

\begin{proof}[Second proof of Theorem~\ref{thm:CN}]
  Let $c(n,k)$ denote the number of permutations of $[n]$ with $k$ cycles. The
  following is well known, see \cite[Proposition 1.3.4]{Stanley1997}:
\[
\sum_{k=0}^n c(n,k)x^k = x(x+1)\cdots(x+n-1).
\]
Substituting $n$ and $x$ by $n-1$ and $x+y$ respectively in the above equation,
we obtain
\[ 
\sum_{i,j} c(n-1,i+j)\binom{i+j}{i}x^i y^j = (x+y)_{n-1}.
\] 
Thus it is sufficient to show that the number of $T\in\PT(n)$ with $\urr(T)-1=i$
and $\topone(T)=j$ is equal to $c(n-1,i+j)\binom{i+j}{i}$. For such a
permutation tableau $T$, let $\pi=\CN(T)$ and assume that $\pi$ is decomposed as
$\sigma 1 \tau$. By Lemma~\ref{lem:topone}, the number of RL-minima of $\tau$ is
$i$ and the number of RL-maxima of $\sigma$ is $j$.  Consider the set $C_1$ of
cycles of $\tau$ according to RL-minima and the set $C_2$ of cycles of $\sigma$
according to RL-maxima. Then $C_1$ and $C_2$ have $i$ and $j$ cycles respectively,
and their union forms a permutation of $\{2,3,\ldots,n\}$ with $i+j$ cycles. For
any permutation of $\{2,3,\ldots,n\}$ with $i+j$ cycles, there are
$\binom{i+j}{i}$ ways to distribute them into $C_1$ and $C_2$. Since we can
reconstruct $\tau$ and $\sigma$ from $C_1$ and $C_2$, the number of $\pi=\CN(T)$
is equal to $c(n-1,i+j)\binom{i+j}{i}$.
\end{proof}

\section{Unrestricted columns and sign-imbalance}\label{sec:unrestricted}

Recall that $\urc(T)$ denotes the number of unrestricted columns of $T$.  We define
\[
P_t(x)=\sum_{n\geq0}  \sum_{T\in \PT(n)} t^{\urc(T)} x^n.
\]
Then Theorem~\ref{thm:gf} is the same as the following.

\begin{thm}\label{thm:ptx}
We have
$$P_t(x) = \frac{1+E_t(x)}{1+(t-1)x E_t(x)},$$  
where 
$$E_t(x)=\sum_{n\geq1} n (t)_{n-1} x^n.$$
\end{thm}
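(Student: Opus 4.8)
The plan is to set up a decomposition of permutation tableaux that mirrors the structure of a rational generating function: write each $T$ either as an "indecomposable" piece or as a sequence of such pieces glued along the diagonal, and keep track of the unrestricted-column statistic under this gluing. First I would introduce the relevant building block: a permutation tableau is \emph{column-connected} (or "indecomposable" in the appropriate sense) if it cannot be split as a smaller tableau sitting in the top-left corner with the remaining rows having no filled cells in the first columns. Equivalently, via the bijection $\CN$, indecomposability should translate into a connectedness condition on the associated permutation. The point of Corollary~\ref{cor:connected} (and the appearance of $E_t(x)$, which at $t=2$ has coefficients $n\cdot n! = (n+1)! - n!$, closely tied to connected-permutation counts) strongly suggests that the indecomposable tableaux of length $n$ are counted by $n(t)_{n-1}$ with the weight $t^{\urc}$, i.e.\ by the coefficient of $x^n$ in $E_t(x)$. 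So step one is: \emph{prove that $\sum_{T \text{ indecomposable}, |T|=n} t^{\urc(T)} = n(t)_{n-1}$.} I would do this by the same methods as in the two proofs of Theorem~\ref{thm:CN} — either a direct recurrence on the shape, or by pushing through $\CN$ and counting permutations with a connectedness property and a controlled number of cycles, using Lemma~\ref{lem:topone} to read $\urc(T)$ off the permutation.

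Step two is the gluing. Every $T\in\PT(n)$ with $n\ge 1$ decomposes uniquely into an ordered sequence of indecomposable blocks $T_1, T_2, \dots, T_k$ stacked along the main diagonal, where $T_1$ occupies the top-left corner. I would need the bookkeeping: the lengths add up to $n$, and — here is the subtle part — the unrestricted columns do \emph{not} simply add. When a block $T_{j}$ for $j\ge 2$ is glued below-and-right of the earlier blocks, the columns of $T_j$ may acquire new $1$'s to their left coming from rows of earlier blocks, and this can destroy their being unrestricted; conversely, rows that were unrestricted in an earlier block can fail to be unrestricted in $T$. So the naive identity $t^{\urc(T)} = \prod t^{\urc(T_j)}$ fails, and the correction factor is exactly what produces the denominator $1 + (t-1)xE_t(x)$ rather than $\frac{1}{1 - xE_t(x)}$. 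Concretely I expect: for each block after the first, each unrestricted column of that block either stays unrestricted (contributing $t$) or gets "killed" by the presence of earlier rows (contributing $1$), and the combinatorics of which columns survive is governed by a single binary choice per block, yielding the replacement of a factor $E_t(x)$ by $E_t(x)\cdot(\text{something})$ — or, cleaner, the generating function for a sequence of $k\ge 1$ glued blocks is $\dfrac{E_t(x)}{1 - (t-1)xE_t(x)}\cdot(\text{adjustment})$, and adding the empty tableau (the $1$ in the numerator) gives the stated formula after algebra. The honest way to pin down the exact correction is to track, through $\CN$, how the cycle structure of $\pi$ splits across blocks and how RL-minima/maxima interact at the block boundaries.

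Alternatively — and this may be the cleaner route — I would bypass the combinatorial gluing and argue entirely on the permutation side. By $\CN$, $P_t(x) = \sum_{n\ge 0}\sum_{\pi\in S_n} t^{\urc(\CN^{-1}\pi)} x^n$, and one shows $\urc(\CN^{-1}\pi)$ has a clean description in terms of $\pi$ (presumably: the number of RL-maxima of a suitable subword, dual to Lemma~\ref{lem:topone}(2), or some count of certain cycle-tops). Then the statement becomes a generating-function identity over all permutations with this statistic, provable by the standard "first-block decomposition" of a permutation by its leading RL-minimum or by its cycle containing a distinguished element — which is exactly the operation that turns a sum $\sum$ into $\frac{N}{1 - \text{stuff}}$. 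Either way, the single genuine obstacle is \textbf{identifying the correction term at block boundaries}: establishing precisely how $\urc$ changes when blocks are concatenated (or, on the permutation side, how the $\urc$-statistic behaves under the first-block split). Once that is correctly formulated, both the numerator $1 + E_t(x)$ and the denominator $1 + (t-1)xE_t(x)$ should fall out by routine manipulation of formal power series, and the two special cases $t=2$ (Corollary~\ref{cor:connected}) and $t=-1$ (Corollary~\ref{cor:sign}) follow by direct substitution and evaluation.
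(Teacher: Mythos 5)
There is a genuine gap, and in fact the proposed ``step one'' is false as stated. You claim that the indecomposable (column-connected) tableaux of length $n$ satisfy $\sum_T t^{\urc(T)} = n(t)_{n-1}$. Take $n=2$: the right side is $2t$, but $\PT(2)$ consists of only two tableaux (the empty two-row tableau with $\urc=0$ and the single-cell tableau with $\urc=1$), so the total weight of \emph{all} of $\PT(2)$ is $1+t$, and no subclass can have weight $2t$. The coefficient $n(t)_{n-1}$ of $E_t(x)$ is not a $t^{\urc}$-weighted count of a subfamily of $\PT(n)$ at all; in the paper it arises, after specializing $t$ to a positive integer, as the number of tableaux in $\PT(n+t-1)$ whose first $t-1$ rows are unrestricted and whose $t$-th border step is a south step (counted via $\CN$ and Lemma~\ref{lem:topone} as permutations in which $1,\dots,t-1$ appear in order as RL-minima and $t$ is an ascent, giving $\frac{(n+t-1)!}{(t-1)!}-(t-1)\frac{(n+t-2)!}{(t-1)!}=n(t)_{n-1}$). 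Your second step has the same problem in a different guise: you acknowledge that the correction at block boundaries is ``the single genuine obstacle'' and never identify it. Note also that $\frac{1+E_t}{1+(t-1)xE_t}$ is not of the sequence form $\frac{1}{1-B_t}$ for the block series you describe; solving for the putative block generating function gives $B_t=\frac{(1-(t-1)x)E_t}{1+E_t}$, whose $\frac{1}{1+E_t}$ factor has alternating signs and no direct ``one indecomposable block'' meaning.

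The paper's route is different and avoids needing any description of $\urc$ on the permutation side. It proves the equivalent identity $1+E_t(x)=P_t(x)+(t-1)xE_t(x)P_t(x)$ for each positive integer $t$ (enough, by polynomiality in $t$). Each $T\in\PT(n)$ is embedded into $\PT(n+t-1)$ by prepending $t-1$ rows of $0$'s and then independently choosing, for each unrestricted column, how to fill its top $t-1$ cells with $0$'s and $1$'s ($t$ choices each), so $P_t(x)$ becomes the generating function for tableaux of length $n+t-1$ satisfying three explicit conditions; $1+E_t(x)$ counts those satisfying only the first two, and the discrepancy set is decomposed at the leftmost zero column into a piece contributing $(t-1)x$, an $E_t$-piece and a $P_t$-piece. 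If you want to salvage your approach, you would need to either prove the block-weight lemma with the correct (non-multiplicative) weight transfer, or switch to the paper's evaluation-at-positive-integers device.
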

\begin{proof}
We will prove the following equivalent formula:
\begin{equation}
  \label{eq:2}
1+ E_t(x) = P_t(x) + (t-1)x E_t(x) P_t(x).  
\end{equation}

Since the coefficients of $x^n$ in both sides are polynomials in $t$, it is
sufficient to prove \eqref{eq:2} for all positive integers $t$.  Assume that $t$
is a positive integer.

  We use a similar argument as in the proof of Theorem~\ref{thm:CN}. However, we
  will use the permutation tableau itself instead of the alternative
  representation.

  Given $T\in \PT(n)$, let $T'$ be the permutation tableau in $\PT(n+t-1)$
  obtained from $T$ by adding $t-1$ south steps at the beginning, where the
  first $t-1$ rows (the added rows) have $0$'s only.  Now consider all
  permutation tableaux obtained from $T'$ by replacing the first $t-1$ $0$'s
  with $i$ $0$'s and $t-1-i$ $1$'s for some $0\leq i\leq t-1$ in each column if
  it was an unrestricted column in $T$. Note that this replacement never
  violates the condition for a permutation tableau because we add 1's only on
  top of an unrestricted column. Clearly, there are $t^{\urc(T)}$ such
  permutation tableaux for each $T\in\PT(n)$.  On the other hand, it is not
  difficult to see that these permutation tableaux are exactly those in
  $\PT(n+t-1)$ satisfying the following conditions.
\begin{enumerate}
\item Each integer $1\leq i\leq t-1$ is the label of an unrestricted row.
\item The integer $t$ is the label of a row.
\item When we remove the first $t-1$ rows, we get a permutation tableau. 
\end{enumerate}
Thus $P_t(x)$ is the generating function for the number of $P\in \PT(n+t-1)$
satisfying the above three conditions.

By Lemma~\ref{lem:topone}, $P\in\PT(n+t-1)$ satisfies the conditions (1) and (2)
if and only if $1,2,\ldots,t-1$ are RL-minima of $\pi$ and $t$ is an ascent of
$\pi$ for $\pi=\CN(P)\in S_{n+t-1}$. Equivalently, $1,2,\ldots,t-1$ are arranged
in this order and $t$ is not immediately followed by any of $1,2,\ldots,t-1$ in
$\pi$. The number of such $\pi$ is equal to
\begin{equation}
  \label{eq:3}
\frac{(n+t-1)!}{(t-1)!} - (t-1) \cdot \frac{(n+t-2)!}{(t-1)!}=
n\cdot t(t+1)\cdots(t+n-2) = n(t)_{n-1}.
\end{equation}

Thus the left hand side of \eqref{eq:2} is the generating function for the
number of $P\in \PT(n+t-1)$ satisfying the conditions (1) and (2).  Since
$P_t(x)$ is the generating function for the number of $P\in \PT(n+t-1)$
satisfying the conditions (1), (2), and (3), it is enough to show that $(t-1)x
E_t(x) P_t(x)$ is the generating function for the number of $P\in \PT(n+t-1)$
satisfying the conditions (1) and (2), but not (3).

\begin{figure}
  \centering
\begin{pspicture}(0,0)(11,-8)
\psline{C-C}(0,-8)(0,0)(11,0)
\psline(6,-3)(6,-6)(0,-6)
\rput(11.5,-1.5){\psscaleboxto(.5,3){\}}}
\rput(12.7,-1.5){$t-1$}
\cell(1,1)[] \cell(1,2)[] \cell(1,3)[] \cell(1,4)[] \cell(1,5)[] \cell(1,6)[]
\cell(1,7)[] \cell(1,8)[] \cell(1,9)[] \cell(1,10)[] \cell(1,11)[] \cell(2,1)[]
\cell(2,2)[] \cell(2,3)[] \cell(2,4)[] \cell(2,5)[] \cell(2,6)[] \cell(2,7)[]
\cell(2,8)[] \cell(2,9)[] \cell(2,10)[] \cell(2,11)[] \cell(3,1)[] \cell(3,2)[]
\cell(3,3)[] \cell(3,4)[] \cell(3,5)[] \cell(3,6)[] \cell(3,7)[1] \cell(3,8)[]
\cell(3,9)[] \cell(3,10)[] \cell(3,11)[]
\cell(4,7)[0] 
\cell(5,7)[0] 
\cell(6,7)[0] 
\rput(3,-4.5){\bf \Huge 0}
\pspolygon[linewidth=3pt](11,-3)(11,-4)(10,-4)(10,-5)(8,-5)(8,-6)(7,-6)(7,-3)
\pspolygon[linewidth=3pt](0,-6)(6,-6)(6,-7)(4,-7)(4,-8)(3,-8)(0,-8)
\rput(2,-7){\bf \huge $T_1$}
\rput(9,-4){\bf \huge $T_0$}
\rput(6.5,-6.5){$b$}
\end{pspicture}
 \caption{The decomposition of $Q$.}
  \label{fig:decomp}
\end{figure}

Now consider $Q\in\PT(n+t-1)$ satisfying the conditions (1) and (2), but not
(3). We will decompose $Q$ as shown in Figure~\ref{fig:decomp}.  Let $Q'$ be the
tableau obtained from $Q$ by removing the first $t-1$ rows. Then $Q'$ must have
at least one zero column. Let $b$ be the label of the leftmost zero column of
$Q'$.  Since $Q$ is a permutation tableau, in $Q$, Row $t-1$ must have a 1 in
Column $b+t-1$.  Moreover, by the condition for a permutation tableau, all cells
to the left of the cells in zero columns of $Q'$ have $0$'s only. Then in
Figure~\ref{fig:decomp}, $T_1$ together with the part of the first $t-1$ rows of
$Q$ to the left of Column $b+t-1$ is a permutation tableau satisfying the
conditions (1), (2), and (3), and $T_0$ together with the part of the first
$t-1$ rows of $Q$ to the right of Column $b+t-1$ is a permutation tableau
satisfying the conditions (1) and (2). Note that the length of $T_0$ is at least
$1$ because the first step (a south step) of $T_0$ exists, and the length of
$T_1$ may be 0. Since Column $b+t-1$ contributes the factor $(t-1)x$, we obtain
that $(t-1)x E_t(x) P_t(x)$ is the desired generating function.
\end{proof}

For the rest of this section we study the special cases $t=2$ and $t=-1$ of
Theorem~\ref{thm:ptx}.

\subsection{The case $t=2$ : connected permutations}

When $t=2$ in Theorem~\ref{thm:ptx}, we get the following.

\begin{cor}\label{cor:p2x}
We have
\[
P_2(x) =\frac{1}{x} \left(1-\frac{1}{\sum_{n\geq0}n!x^n}\right).
\]
\end{cor}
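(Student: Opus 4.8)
The plan is to substitute $t=2$ directly into Theorem~\ref{thm:ptx} and then rewrite the resulting rational function of power series in closed form. With $t=2$ we have $(2)_{n-1}=n!$ for $n\geq1$ (as noted in the introduction) and $t-1=1$, so Theorem~\ref{thm:ptx} gives
\[
P_2(x)=\frac{1+E_2(x)}{1+xE_2(x)},\qquad E_2(x)=\sum_{n\geq1}n\cdot n!\,x^n.
\]
Writing $F(x)=\sum_{n\geq0}n!\,x^n$ for the series in the denominator on the right-hand side of the corollary, the goal becomes to show $P_2(x)=\frac{F(x)-1}{xF(x)}=\frac1x\bigl(1-\frac1{F(x)}\bigr)$.

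The one thing to spot is the telescoping identity $n\cdot n!=(n+1)!-n!$. Using it I would compute
\[
xE_2(x)=\sum_{n\geq1}\bigl((n+1)!-n!\bigr)x^{n+1}=\sum_{m\geq2}m!\,x^m-x\sum_{n\geq1}n!\,x^n=\bigl(F(x)-1-x\bigr)-x\bigl(F(x)-1\bigr),
\]
which collapses to $xE_2(x)=(1-x)F(x)-1$. Hence $1+xE_2(x)=(1-x)F(x)$, and, dividing the displayed relation for $xE_2(x)$ by $x$ and adding $1$,
\[
1+E_2(x)=\frac{x+(1-x)F(x)-1}{x}=\frac{(1-x)\bigl(F(x)-1\bigr)}{x}.
\]

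Finally I would divide these two expressions, cancelling the common factor $1-x$:
\[
P_2(x)=\frac{1+E_2(x)}{1+xE_2(x)}=\frac{(1-x)\bigl(F(x)-1\bigr)/x}{(1-x)F(x)}=\frac{F(x)-1}{xF(x)}=\frac1x\left(1-\frac1{F(x)}\right),
\]
which is exactly the asserted identity. I do not expect any genuine obstacle here: once Theorem~\ref{thm:ptx} is available the whole argument is a short formal power-series manipulation, and the only nonroutine ingredients are recognizing the telescoping $n\cdot n!=(n+1)!-n!$ and the cancellation of $1-x$ in the final quotient.
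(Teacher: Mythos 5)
Your proof is correct and follows essentially the same route as the paper: substitute $t=2$ into Theorem~\ref{thm:ptx} and simplify using the relation between $E_2(x)$ and $\sum_{n\geq0}n!x^n$ (your identity $xE_2(x)=(1-x)F(x)-1$ is exactly the paper's $E_2(x)=\frac{1-x}{x}E_1(x)-1$ rewritten with $F=1+E_1$). The telescoping $n\cdot n!=(n+1)!-n!$ is just one clean way to verify that relation, and the remaining algebra matches the paper's computation.
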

\begin{proof}
Since
\[E_1(x) = \sum_{n\geq1} n! x^n, \qquad 
E_2(x) = \sum_{n\geq1} n\cdot n! x^n = \frac{1-x}{x} E_1(x) -1,\]
by Theorem~\ref{thm:ptx}, we get
\begin{align*}
P_2(x) &= \frac{1+E_2(x)}{1+x\cdot E_2(x)} 
= \frac{\frac{1-x}{x} E_1(x)}{1+(1-x)E_1(x)-x}\\
&=\frac{1}{x} \cdot \frac{E_1(x)}{1+E_1(x)}
=\frac{1}{x} \left(1-\frac{1}{1+E_1(x)}\right)\\
&=\frac{1}{x} \left(1-\frac{1}{\sum_{n\geq0}n!x^n}\right).
\end{align*}
\end{proof}

A \emph{connected permutation} is a permutation $\pi=\pi_1\cdots\pi_n\in S_n$
such that $\pi$ does not have a proper prefix which is a permutation, in other
words, there is no integer $k<n$ satisfying $\pi_1\cdots\pi_k\in S_k$.  Let
$f(n)$ denote the number of connected permutations of $[n]$. It is known that
$\sum_{n\geq0} f(n) x^n = 1-\left(\sum_{n\geq0}n!x^n\right)^{-1}$, see
\cite{Stanley2005a}. Thus, by Corollary~\ref{cor:p2x}, we get the following.

\begin{cor}\label{thm:connected}
We have
\[
\sum_{T\in\PT(n)} 2^{\urc(T)} = f(n+1).
\]  
\end{cor}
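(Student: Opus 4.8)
The plan is to combine the two corollaries that have just been established, namely Corollary~\ref{cor:p2x} and the known generating function for connected permutations, by a direct comparison of coefficients. First I would recall that Corollary~\ref{cor:p2x} gives
\[
P_2(x) = \frac{1}{x}\left(1 - \frac{1}{\sum_{n\geq 0} n! x^n}\right),
\]
and that the cited result (see \cite{Stanley2005a}) states
\[
\sum_{n\geq 0} f(n) x^n = 1 - \left(\sum_{n\geq 0} n! x^n\right)^{-1}.
\]
Writing $F(x) = \sum_{n\geq 0} f(n) x^n$, the right-hand side of Corollary~\ref{cor:p2x} is exactly $F(x)/x$, so $P_2(x) = F(x)/x$.

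Next I would unwind the definition. By definition $P_2(x) = \sum_{n\geq 0} \left(\sum_{T\in\PT(n)} 2^{\urc(T)}\right) x^n$, so the identity $x P_2(x) = F(x)$ says that for every $n$ the coefficient of $x^{n+1}$ on the two sides agree, i.e.
\[
\sum_{T\in\PT(n)} 2^{\urc(T)} = f(n+1),
\]
which is exactly the claimed formula. One small point worth checking is the constant term: $P_2(x)$ has constant term $1$ (coming from the empty tableau in $\PT(0)$), so $x P_2(x)$ has no constant term, matching $f(0) = 0$ in the convention for connected permutations; this is consistent and requires no separate argument.

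The proof is therefore essentially a one-line consequence of the two preceding corollaries, and there is no real obstacle — the only thing to be careful about is bookkeeping the index shift (the $n+1$ versus $n$) and making sure the generating-function identity from the literature is quoted in the exact normalization used here. I would phrase the proof as: "Let $F(x)=\sum_{n\geq0}f(n)x^n$. By \cite{Stanley2005a} we have $F(x)=1-\left(\sum_{n\geq0}n!x^n\right)^{-1}$, and by Corollary~\ref{cor:p2x} this equals $xP_2(x)$. Comparing coefficients of $x^{n+1}$ gives the result."
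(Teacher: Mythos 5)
Your proposal is correct and is exactly the paper's own derivation: the authors state the corollary as an immediate consequence of Corollary~\ref{cor:p2x} together with the identity $\sum_{n\geq0} f(n)x^n = 1-\bigl(\sum_{n\geq0}n!x^n\bigr)^{-1}$ from \cite{Stanley2005a}, which is precisely your coefficient comparison after the index shift. (The paper additionally supplies a separate combinatorial proof via shift-connected permutations in Propositions~\ref{thm:7} and \ref{thm:9}, but that is beyond what the statement requires.)
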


Now we prove Corollary~\ref{thm:connected} combinatorially. To do this we
introduce the following terminology.

A \emph{shift-connected permutation} is a permutation $\pi=\pi_1\cdots\pi_n\in
S_n$ satisfying the following: if $\pi_j = 1$, there is no integer $i<j$ such
that $\pi_i\pi_{i+1}\cdots\pi_{j}\in S_{j-i+1}$. Note that if $\pi_n=1$ then
$\pi$ is not a shift-connected permutation because $\pi_1\cdots\pi_n\in S_n$.

\begin{prop}\label{thm:7}
  The number of connected permutations of $[n]$ is equal to the number of
  shift-connected permutations of $[n]$.
\end{prop}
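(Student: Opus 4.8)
The plan is to construct an explicit bijection between connected permutations of $[n]$ and shift-connected permutations of $[n]$. The natural idea is to exploit the fact that being ``connected'' is a statement about proper prefixes being permutations of initial segments $[k]$, whereas being ``shift-connected'' relaxes this to proper factors ending where the symbol $1$ sits being permutations of consecutive-integer intervals. So I would look for an operation that ``cyclically rotates'' a permutation so that the distinguished position becomes the front.

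Concretely, given $\pi = \pi_1 \cdots \pi_n \in S_n$, locate the position $j$ with $\pi_j = 1$, and send $\pi$ to the word $\sigma = \pi_j \pi_{j+1} \cdots \pi_n \pi_1 \pi_2 \cdots \pi_{j-1}$, i.e. cyclically shift the one-line notation so that $1$ comes first. This is clearly a bijection from $S_n$ to the set of permutations of $[n]$ starting with $1$ (with inverse being ``cyclically shift so that $1$ returns to... '' — wait, that's not injective on all of $S_n$). So the first thing I would do is make the map genuinely invertible: the cyclic-shift idea alone loses the original position of $1$. The fix is to conjugate by a suitable involution or, better, to use the following standard trick. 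Replace the cyclic shift by the map that ``cuts'' $\pi$ right before the position of $1$ and swaps the two pieces: $\pi = \alpha \cdot (1\beta)$ with $\alpha = \pi_1\cdots\pi_{j-1}$, $\beta = \pi_{j+1}\cdots\pi_n$, and send it to $(1\beta)\cdot\alpha$. To recover $\pi$ from a word $\sigma$ starting with $1$, we need to know where to cut $\sigma$; the point is that connectedness of $\pi$ exactly translates into: $\sigma$ has a unique valid cut point, OR the cut data can be read off combinatorially. Actually the cleaner approach: I would restrict attention to the bijection $S_n \setminus (\text{perms ending in }1) \to \{\text{perms of }[n]\text{ starting with }1\}$ obtained by the cyclic shift, then argue that $\pi$ is connected iff the image is shift-connected, and separately note that connected $\pi$ never end in $1$ (for $n \geq 2$), matching the parenthetical remark after the definition of shift-connected.

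So the key steps, in order, are: (i) define the cyclic-shift map $\Theta$ sending $\pi$ to its rotation beginning with $1$, and identify precisely its fibers (two permutations have the same image iff they are cyclic rotations of one another with $1$ in the same block structure — so $\Theta$ is not injective in general); (ii) observe that a connected permutation of $[n]$ with $n\geq 2$ cannot have $\pi_n = 1$, and more strongly that among all cyclic rotations of a given cyclic word exactly one is connected — no, that is false too. I think the honest route is: (i) show $\pi$ connected $\iff$ for every $k < n$, $\{\pi_1,\dots,\pi_k\} \neq [k]$; (ii) define $\Theta(\pi) = \pi_j\pi_{j+1}\cdots\pi_n\pi_1\cdots\pi_{j-1}$ where $\pi_j=1$, and prove $\Theta$ is a bijection from connected permutations of $[n]$ to shift-connected permutations of $[n]$. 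For surjectivity and injectivity I would exhibit the inverse explicitly: given a shift-connected $\sigma$ starting with $1$, find the position where the ``first return to a consecutive interval starting from the front'' would occur — by shift-connectedness no proper factor $\sigma_1\cdots\sigma_m$ (for $m<n$, with the left endpoint being position $1$ and $\sigma$'s $1$ at position $1$) is an interval-permutation, so the decomposition is forced, and cut accordingly.

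The main obstacle I anticipate is pinning down the inverse map and verifying that ``connected'' and ``shift-connected'' correspond bijectively rather than just one implication holding. The subtlety is that the cyclic shift a priori collapses many permutations, so I must check that distinct connected permutations have distinct rotations-starting-with-$1$ (this should follow because if two connected permutations were cyclic rotations of each other, one could derive a proper-prefix contradiction), and dually that every shift-connected word starting with $1$ arises exactly once (which requires reading the cut position off the shift-connectedness condition). I would handle this by translating both conditions into the language of ``break points'' / ``global descents'': $\pi$ is connected iff $\{\pi_1,\dots,\pi_k\}\neq\{1,\dots,k\}$ for all $k<n$, and $\sigma$ (starting with $1$) is shift-connected iff $\{\sigma_1,\dots,\sigma_m\}$ is never an interval $\{1,\dots,m\}$ for $1\le m<n$ — and then the rotation sending the position of $1$ to the front carries one family of forbidden-interval conditions bijectively onto the other. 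Once that dictionary is set up, both directions of the correspondence and the bijectivity are short checks.
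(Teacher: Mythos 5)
There is a genuine gap: the central map you propose --- cyclically rotating $\pi$ so that $1$ comes to the front --- cannot be a bijection from connected to shift-connected permutations, and the difficulties you flag along the way are not repairable within this approach. First, the map is not injective on connected permutations: for $n=3$ both $231$ and $312$ are connected, and both rotate to $123$; this also refutes your hoped-for claim that two connected permutations cannot be cyclic rotations of one another. Second, the map is not surjective onto shift-connected permutations: its image consists only of words beginning with $1$, but $312$ is shift-connected (the only factor ending at the position of $1$ with $i<j$ is $31$, which is not in $S_2$) and does not begin with $1$. Third, your proposed dictionary ``$\sigma$ starting with $1$ is shift-connected iff $\{\sigma_1,\dots,\sigma_m\}\neq\{1,\dots,m\}$ for all $m<n$'' misreads the definition: shift-connectedness only constrains factors $\pi_i\cdots\pi_j$ \emph{ending} at the position $j$ of the symbol $1$, so every permutation beginning with $1$ is vacuously shift-connected (there is no $i<j=1$), whereas your criterion would exclude all of them since $\{\sigma_1\}=\{1\}$. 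The counts confirm the mismatch: for $n=3$ there are $3$ shift-connected permutations ($123$, $132$, $312$), of which two begin with $1$, so no bijection can land only on words starting with $1$.

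For comparison, the paper sidesteps all of this by constructing a bijection between the \emph{complements}: given a non-connected $\pi$, it takes the smallest $k<n$ with $\sigma=\pi_1\cdots\pi_k\in S_k$, writes $\pi=\sigma\tau(k+1)\rho$, and maps it to $\tau\sigma^+1\rho$, where $\sigma^+$ is $\sigma$ reversed with every entry increased by $1$. This produces a factor ending at the new position of $1$ that is a permutation of $\{1,\dots,k+1\}$ (namely $\sigma^+1$), so the image is non-shift-connected, and the minimality of $k$ makes the decomposition recoverable, hence the map invertible. If you want to salvage a direct bijection between the connected and shift-connected sets themselves, you would need an entirely different mechanism; the rotation idea conflates distinct connected permutations and misses the shift-connected ones in which $1$ is not initial.
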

\begin{proof}
\def\CP{\mathrm{CP}}
\def\SCP{\mathrm{SCP}}
  Let $\CP(n)$ (resp.~$\SCP(n)$) denote the set of connected (resp.~shift-connected)
  permutations of $[n]$. It is sufficient to find a bijection between
  $S_n\setminus \CP(n)$ and  $S_n\setminus \SCP(n)$.

  Given $\pi=\pi_1\cdots\pi_n\in S_n\setminus \CP(n)$, we define $\pi'\in
  S_n\setminus \SCP(n)$ as follows. Since $\pi$ is not a connected permutation,
  we can take the smallest integer $k<n$ such that $\sigma=\pi_1\cdots\pi_k \in
  S_k$.  We decompose $\pi$ as $\pi= \sigma \tau (k+1) \rho$. Define $\pi' =
  \tau \sigma^+ 1 \rho$ where $\sigma^+ = \pi^+_k \pi^+_{k-1} \cdots \pi^+_1$
  with $\pi^+_i = \pi_i +1$ for $i\in[k]$. It is easy to see that
  $\pi\mapsto\pi'$ is a bijection between $S_n\setminus \CP(n)$ and $S_n\setminus
  \SCP(n)$.
\end{proof}

The following proposition gives us a relation between permutation tableaux and
shift-connected permutations.

\begin{prop}\label{thm:8}
  Let $T\in\PT(n)$ and $\pi=\CN(T)$. Then $T$ has a column
  which has a $1$ only in the first row if and only if $\pi$ is not a
  shift-connected permutation.
\end{prop}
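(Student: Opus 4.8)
The plan is to translate the tableau-side condition "$T$ has a column with a $1$ only in the first row" into a condition on $\pi=\CN(T)$ using Lemma~\ref{lem:topone}, and then to recognize that this matches the failure condition in the definition of a shift-connected permutation. First I would observe that a column $i$ of $T$ has a $1$ only in its first row precisely when Column $i$ contains a $\uparrow$ in Row $1$ (so $i$ is counted by $\topone(T)$) and has no other $1$; equivalently, in the alternative representation, Column $i$ has its $\uparrow$ in Row $1$ and has no $\leftarrow$, and moreover no row below has a $1$ in Column $i$. The key point is to see what the construction of $\CN$ does with such a column: when we process Column $i$, since the $\uparrow$ sits in Row $1$ and there are no $\leftarrow$'s in that column, we insert just the single label $i$ immediately before the label $1$ in the partially built word. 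So after all columns are processed, $i$ ends up immediately to the left of $1$ in $\pi$ — that is, $\pi$ contains the consecutive pattern $\ldots, i, 1, \ldots$

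Conversely, I would argue that if some value $i$ sits immediately before $1$ in $\pi=\CN(T)$, then Column $i$ of $T$ has a $1$ only in the first row. Here I would use Lemma~\ref{lem:topone}(1): the entries immediately preceding $1$ that are not row-labels must be column labels inserted by the algorithm, and tracing back through the insertion procedure, $i$ was placed directly before $1$ only if, at the step processing Column $i$, the row containing its $\uparrow$ was Row $1$ and there were no $\leftarrow$'s in Column $i$ (otherwise some $i_1,\ldots,i_r$ would have been inserted between $i$ and the $\uparrow$-row's label, or $i$ would have been inserted before some row label other than $1$). Combined with the fact that a $\uparrow$ in Row $1$ together with no restricted $0$'s below forces all lower entries of that column to be $0$, this gives that Column $i$ has a $1$ only in Row $1$.

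The last step is bookkeeping: I would note that $\pi$ fails to be shift-connected exactly when, writing $\pi_j=1$, there is $i<j$ with $\pi_i\pi_{i+1}\cdots\pi_j\in S_{j-i+1}$; the minimal such interval is a "return to $1$" and, because $1$ is the smallest value, such an interval $\{\pi_i,\ldots,\pi_j\}=\{1,2,\ldots,j-i+1\}$ exists with $i<j$ if and only if the value $j-i+1$ (the maximum of that block) can be taken as small as $2$, i.e.\ if and only if $2$ appears immediately before $1$ — more generally, chasing the definition shows the obstruction is equivalent to "some value appears immediately to the left of $1$ in $\pi$." (In fact $\pi$ is shift-connected iff $1$ is a left-to-right... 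I would state the precise equivalence carefully: the block $\pi_i\cdots\pi_j$ is an interval-permutation ending at $1$ iff $\pi_{j-1}$ immediately precedes $1$ and lies in that interval, and one can always shrink to $i=j-1$, which forces $\pi_{j-1}=2$; but the proposition only needs the "some column" $\Leftrightarrow$ "not shift-connected" direction, so it suffices to show: $1$ is not the first letter and is not preceded by an ascent-top in the shift sense $\Leftrightarrow$ $\exists$ such a column.)

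The main obstacle I expect is the careful analysis of the insertion algorithm $\CN$ in the converse direction: showing that $i$ landing immediately before $1$ forces Row $1$ to hold the $\uparrow$ of Column $i$ and forces Column $i$ to have no $\leftarrow$. One must rule out the scenario where $i$ gets inserted before $1$ because $1$ happened to be the row-label with the $\uparrow$ but some other column's $\leftarrow$'s were later inserted between them, or where a subsequently-processed column shifts things; tracking the relative order of already-inserted letters under the algorithm (which only inserts blocks $i_1,\ldots,i_r,i$ immediately before an existing letter) is the delicate part, though it is essentially a monotonicity observation: once a letter is placed, letters inserted later go either entirely before it or entirely after it as a contiguous block, never splitting an adjacent pair that both lie "after" the insertion point. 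Everything else is a direct application of Lemma~\ref{lem:topone} and the reconstruction rule for the alternative representation.
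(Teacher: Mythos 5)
Your reduction of both sides of the equivalence contains genuine errors, not just loose bookkeeping. On the tableau side, a column with a $1$ only in the first row does \emph{not} correspond to ``Column $i$ has its $\uparrow$ in Row $1$ and has no $\leftarrow$.'' The $0$'s below that topmost $1$ are all restricted, so the rightmost restricted $0$ of each such row becomes a $\leftarrow$, and these frequently sit in Column $i$ itself. Worse, in the alternative representation an empty cell with no arrow pointing to it reconstructs to a $1$, so a column with its $\uparrow$ in Row $1$ and no $\leftarrow$ in it or pointing into it would reconstruct to a column of all $1$'s --- the opposite of what you want. The correct condition, which is what the paper's proof isolates, is that \emph{every} cell of the column below the top one either is a $\leftarrow$ or has a $\leftarrow$ pointing to it.

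On the permutation side, neither ``some value appears immediately to the left of $1$'' nor ``$2$ appears immediately before $1$'' is equivalent to failure of shift-connectedness: the word $2,3,1$ is not shift-connected although $3$, not $2$, precedes $1$ (so you cannot ``shrink to $i=j-1$''), and your intermediate claim that the column label $i$ stays adjacent to $1$ fails because any column processed later whose $\uparrow$ is also in Row $1$ inserts its block exactly between $i$ and $1$. A single example breaks the whole chain: let $T\in\PT(4)$ have one column (Column $4$) and three rows, filled $1,1,0$ from top to bottom. Then $\CN(T)=3,4,1,2$, so $4$ is immediately before $1$, yet Column $4$ does not have a $1$ only in the first row, and $3,4,1,2$ \emph{is} shift-connected. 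The statement you actually need is that Column $d$ has a $1$ only in the first row if and only if $\pi$ factors as $\sigma\rho 1\tau$ with $\rho 1\in S_d$, i.e.\ the block of length $d$ ending at the letter $1$ is exactly $\{1,\dots,d\}$; establishing this requires the full column condition above (to show that no letter $2\le i\le d$ escapes that block) together with Lemma~\ref{lem:topone} to locate the labels of the columns with a $\uparrow$ in the first row. That analysis is the substance of the paper's argument and is missing from your proposal.
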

\begin{proof}
  Assume that $T$ has a column which has a $1$ only in the first
  row. Then, in the alternative representation, $T$ has a column, say Column
  $d$, which has a $\uparrow$ in the first row and each cell except the topmost
  cell in the column has a $\leftarrow$ in the cell or pointing to it. We will
  refer this condition as $(*)$. 

  Recall the bijection $\CN$. To construct $\pi=\CN(T)$ we first let $\pi$ be
  the word consisting of the labels of the unrestricted rows of $T$ arranged in
  increasing order. Then starting from the leftmost column we add some integers
  to $\pi$. Assume that we have proceeded just before Column $d$.  We can
  decompose $\pi$ as $\sigma 1 \tau$ at this stage. By the condition $(*)$,
  neither $\sigma$ nor $\tau$ has an integer $i$ with $2\leq i\leq d$.  When we
  continue the construction of $\CN$, by the condition $(*)$ again, no integer
  $i$ with $2\leq i\leq d$ is added before the last integer of $\sigma$ or after
  $1$. Thus at the end we must have $\CN(T)=\pi = \sigma \rho 1 \tau$, which
  implies that $\rho1 \in S_d$. Thus $\pi$ is not a shift-connected permutation.

  Conversely, assume that $\pi$ is not a shift-connected permutation. Then we
  can decompose $\pi$ as $\sigma \rho 1 \tau$ such that $\rho 1\in S_d$ for some
  $d$. We claim that Column $d$ in $T$ has the condition $(*)$. Since $d$ is the
  largest integer in $\rho$, it is a RL-maximum in $\sigma\rho$. Thus Column $d$
  in $T$ has a $\uparrow$ in the first row by Lemma~\ref{lem:topone}. If there
  is an unrestricted row with label $1<i<d$, then $i$ is a RL-minimum of $\tau$
  contradicting to $\rho 1\in S_d$. Hence, it remains to show that there is no
  row with label $1<i<d$ having a $\leftarrow$ in a column with label
  $d'>d$. Suppose there is such a row. We can assume that $i$ is the smallest
  integer satisfying this condition. Let $i'$ be the label or the row containing
  the unique $\uparrow$ in Column $d'$. Since $1\leq i' < i$, we must have
  $i'=1$ because otherwise Row $i'$ satisfies the above condition which is a
  contradiction to the minimality of $i$. Thus Column $d'$ has a $\uparrow$ in
  the first row. By Lemma~\ref{lem:topone}, $d'$ is a RL-maximum of
  $\sigma\tau$. Since Row $i$ has a $\leftarrow$ in Column $d'$, we must have
  $i$ before $d'$ in $\pi$. Since both $d$ and $d'$ are RL-maxima of
  $\sigma\tau$ and $d'>d$, we have $d'$ before $d$. Thus the integers $i,d',d$
  appear in this order in $\pi$. This is a contradiction to $\rho 1\in S_d$.
\end{proof}

Using the argument in the proof of Theorem~\ref{thm:ptx} one can easily see that
$\sum_{T\in\PT(n)} 2^{\urc(T)}$ is the number of $T\in\PT(n+1)$ without a column
containing a $1$ only in the first row. Thus, by Proposition~\ref{thm:8}, we get
the following.

\begin{prop}\label{thm:9}
The number of shift-connected permutations of $[n+1]$ is equal to
\[
\sum_{T\in\PT(n)} 2^{\urc(T)}.
\]
\end{prop}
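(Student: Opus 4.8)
The plan is to chain the $t=2$ instance of the construction from the proof of Theorem~\ref{thm:ptx} together with Proposition~\ref{thm:8}, and then transport everything through the bijection $\CN$. First I would re-establish the observation made just before the proposition, namely that $\sum_{T\in\PT(n)}2^{\urc(T)}$ equals the number of $P\in\PT(n+1)$ having no column whose only $1$ lies in the first row. Given $T\in\PT(n)$, prepend a row of $0$'s, obtaining a Ferrers diagram of length $n+1$; then in the new top cell of each column that is unrestricted in $T$ one may independently write a $0$ or a $1$. Every resulting filling lies in $\PT(n+1)$: the only rule that could be violated is ``no $0$ having a $1$ above it and a $1$ to its left,'' and writing a $1$ on top of a column is harmless precisely when that column is unrestricted (on a restricted column it would create exactly such a forbidden $0$), while leaving a $0$ on top is always harmless; so $T$ produces $2^{\urc(T)}$ tableaux. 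Conversely any $P\in\PT(n+1)$ without a column whose only $1$ is in the first row arises in this way, and uniquely so: deleting the first row of $P$ leaves every column with a $1$, hence gives some $T\in\PT(n)$, and the $1$'s of $P$ in its first row must lie over columns unrestricted in $T$, so $P$ is one of the $2^{\urc(T)}$ fillings built from $T$. Summing $2^{\urc(T)}$ over $T\in\PT(n)$ therefore counts exactly these $P$.

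Next I would apply Proposition~\ref{thm:8} with $n$ replaced by $n+1$: for $P\in\PT(n+1)$, the permutation $\CN(P)\in S_{n+1}$ fails to be shift-connected if and only if $P$ has a column whose only $1$ lies in the first row. Hence the tableaux counted in the previous step are precisely the $P\in\PT(n+1)$ for which $\CN(P)$ is shift-connected, and since $\CN\colon\PT(n+1)\to S_{n+1}$ is a bijection, their number equals the number of shift-connected permutations of $[n+1]$. Combining this with the first step yields the claimed equality.

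The step I expect to be the main obstacle is the first one: one must verify carefully that the prepend-and-fill construction produces \emph{exactly} the set of $P\in\PT(n+1)$ with no column whose only $1$ is in the first row, and each such $P$ exactly once, so that the raw sum $\sum_{T\in\PT(n)}2^{\urc(T)}$ equals this cardinality with neither overcounting nor omission. Once that identification is secured, the remaining steps are a direct transport along Proposition~\ref{thm:8} and the bijectivity of $\CN$.
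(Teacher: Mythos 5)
Your proposal is correct and follows essentially the same route as the paper: the paper likewise obtains the identification of $\sum_{T\in\PT(n)}2^{\urc(T)}$ with the number of $P\in\PT(n+1)$ having no column whose only $1$ is in the first row by invoking the $t=2$ case of the construction in the proof of Theorem~\ref{thm:ptx}, and then concludes via Proposition~\ref{thm:8} and the bijectivity of $\CN$. The only difference is that you spell out the prepend-and-fill verification explicitly, which the paper leaves as an "easy to see" remark.
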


Combining Propositions~\ref{thm:7} and \ref{thm:9}, we get a
combinatorial proof of Corollary~\ref{thm:connected}. 

\subsection{The case $t=-1$ : sign-imbalance}
 
When $t=-1$ in Theorem~\ref{thm:ptx}, we get the following.

\begin{cor}\label{cor:p-1x}
We have
$$P_{-1}(x) = \frac{1-x}{1-2x+2x^2}.$$
\end{cor}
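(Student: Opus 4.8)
The plan is to specialize the formula of Theorem~\ref{thm:ptx} at $t=-1$ and simplify; there is no genuine difficulty beyond a short computation. The key preliminary observation is that the rising factorial $(-1)_{n-1}$ vanishes for every $n\geq 3$. Indeed, $(-1)_0=1$ and $(-1)_1=-1$, while for $n-1\geq 2$ the product $(-1)_{n-1}=(-1)\cdot 0\cdot 1\cdots (n-3)$ contains the factor $0$ and is therefore zero. Consequently the series $E_{-1}(x)=\sum_{n\geq 1}n(-1)_{n-1}x^n$ collapses to its first two terms, so that
\[
E_{-1}(x)=1\cdot(-1)_0\,x+2\cdot(-1)_1\,x^2=x-2x^2 .
\]

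Next I would substitute this into $P_t(x)=\dfrac{1+E_t(x)}{1+(t-1)xE_t(x)}$ with $t=-1$, hence $t-1=-2$, to get
\[
P_{-1}(x)=\frac{1+x-2x^2}{1-2x(x-2x^2)}=\frac{1+x-2x^2}{1-2x^2+4x^3}.
\]
Finally I would clear this to the desired form. The cleanest route is to factor numerator and denominator: $1+x-2x^2=(1-x)(1+2x)$ and $1-2x^2+4x^3=(1+2x)(1-2x+2x^2)$; cancelling the common factor $1+2x$ yields $P_{-1}(x)=\dfrac{1-x}{1-2x+2x^2}$. Alternatively, one can bypass the factorization by checking the polynomial identity $(1-x)(1-2x^2+4x^3)=(1-2x+2x^2)(1+x-2x^2)$ directly, both sides equalling $1-x-2x^2+6x^3-4x^4$.

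The only step that requires any thought is the first one, namely recognizing that $(-1)_{n-1}=0$ for $n\geq 3$; this is precisely what makes $E_{-1}(x)$ a polynomial and hence $P_{-1}(x)$ the rational function claimed. Everything after that is routine algebra, so I do not anticipate any real obstacle.
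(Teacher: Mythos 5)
Your proposal is correct and follows exactly the paper's route: specialize Theorem~\ref{thm:ptx} at $t=-1$, observe that $E_{-1}(x)=x-2x^2$ since $(-1)_{n-1}=0$ for $n\geq3$, and simplify the resulting rational function. You merely spell out the cancellation of the factor $1+2x$, which the paper leaves implicit.
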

\begin{proof}
Since $E_{-1}(x)=x-2x^2$, we obtain
$$P_{-1}(x) = \frac{1+E_{-1}(x)}{1-2x\cdot E_{-1}(x)}
= \frac{1+x-2x^2}{1-2x^2+4x^3} = \frac{1-x}{1-2x+2x^2}.$$
\end{proof}

Recall that $\sgn(T)=(-1)^{\urc(T)}$ for $T\in\PT(n)$.  Since
\[P_{-1}(x) = \frac{1-x}{1-2x+2x^2} = \frac12 \cdot \left( 
\frac1{1-(1+i)x} +\frac1{1-(1-i)x} \right),\]
we get the following sign-imbalance formula for permutation tableaux. 

\begin{cor}\label{thm:5}
We have
\begin{equation}
  \label{eq:13}
\sum_{T\in\PT(n)}\sgn(T) =\frac{(1+i)^n + (1-i)^n}{2}.
\end{equation}
Equivalently, if $n=4k+r$ for $0\leq r<4$, then
\[
\sum_{T\in\PT(n)}\sgn(T) = \left\{
  \begin{array}{ll}
(-1)^k \cdot 2^{2k}, & \mbox{if $r=0$ or $r=1$,}\\
0, & \mbox{if $r=2$,}\\
(-1)^{k+1} \cdot 2^{2k+1}, & \mbox{if $r=3$.}
 \end{array} \right.
\]
\end{cor}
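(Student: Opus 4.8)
The plan is to read off the coefficient of $x^n$ in the rational generating function $P_{-1}(x)$, whose closed form is supplied by Corollary~\ref{cor:p-1x}. By the definitions of $P_t(x)$ and of $\sgn$,
$$P_{-1}(x) = \sum_{n\geq 0}\left(\sum_{T\in\PT(n)}(-1)^{\urc(T)}\right) x^n = \sum_{n\geq 0}\left(\sum_{T\in\PT(n)}\sgn(T)\right) x^n,$$
so the quantity in \eqref{eq:13} is exactly $[x^n]\,P_{-1}(x)$, and everything reduces to a coefficient extraction.

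First I would use the partial fraction decomposition already recorded just before the statement,
$$P_{-1}(x) = \frac{1-x}{1-2x+2x^2} = \frac12\left(\frac{1}{1-(1+i)x} + \frac{1}{1-(1-i)x}\right),$$
which one verifies by clearing denominators (the roots of $1-2x+2x^2$ are $\tfrac{1\pm i}{2}$). Expanding each summand as a geometric series gives $[x^n]\frac{1}{1-(1+i)x}=(1+i)^n$ and likewise for $1-i$, whence $\sum_{T\in\PT(n)}\sgn(T) = \tfrac12\bigl((1+i)^n+(1-i)^n\bigr)$, which is \eqref{eq:13}.

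For the equivalent explicit form I would write $1\pm i = \sqrt{2}\,e^{\pm i\pi/4}$, so that the right-hand side of \eqref{eq:13} equals $2^{n/2}\cos(n\pi/4)$, and then evaluate $\cos(n\pi/4)$ together with $2^{n/2}$ on each residue class $n\equiv 0,1,2,3\pmod 4$; writing $n=4k+r$ this yields the four displayed cases. A complex-number-free alternative is to extract from $(1-2x+2x^2)P_{-1}(x)=1-x$ the recurrence $a_n = 2a_{n-1}-2a_{n-2}$ for $n\geq 2$ with $a_0=a_1=1$, where $a_n:=\sum_{T\in\PT(n)}\sgn(T)$, and then confirm the closed form by induction on $k$, handling the four consecutive values $n=4k,4k+1,4k+2,4k+3$ in each inductive step.

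I do not expect a genuine obstacle here: once Corollary~\ref{cor:p-1x} is in hand, \eqref{eq:13} is immediate, and the only mildly delicate point is the bookkeeping modulo $4$ in the explicit formula, which either of the two routes above settles by a routine computation.
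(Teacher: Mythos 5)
Your proposal is correct and follows essentially the same route as the paper: the paper derives \eqref{eq:13} by exactly the partial fraction decomposition $P_{-1}(x)=\frac12\bigl(\frac1{1-(1+i)x}+\frac1{1-(1-i)x}\bigr)$ of Corollary~\ref{cor:p-1x} and coefficient extraction. Your handling of the mod-$4$ case analysis (via $1\pm i=\sqrt2\,e^{\pm i\pi/4}$ or the recurrence) just makes explicit a step the paper leaves to the reader.
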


\section{Type B permutation tableaux}
\label{sec:typeb}

For a Ferrers diagram $F$ with $k$ columns, the \emph{shifted} Ferrers diagram
of $F$, denoted $\overline F$, is the diagram obtained from $F$ by adding $k$
rows of size $1,2,\ldots,k$ above it in increasing order, see
Figure~\ref{fig:shifted}. The \emph{length} of $\overline F$ is defined to be
the length of $F$.  The \emph{diagonal} of $\overline F$ is the set of rightmost
cells in the added rows. A \emph{diagonal cell} is a cell in the diagonal. We
label the added rows as follows. If the diagonal cell of an added row is in
Column $i$, then the row is labeled with $-i$.

\begin{figure}
  \centering
  \begin{pspicture}(-1,1)(6,-10) 
   \multido{\n=0+1}{6}{\rput(\n,-\n){\psline{C-C}(1,0)}}
    \psline{C-C}(0,0)(0,-10)
    \cell(1,1)[] 
    \cell(2,1)[] \cell(2,2)[] \cell(3,1)[] \cell(3,2)[] 
    \cell(3,3)[] \cell(4,1)[] \cell(4,2)[] \cell(4,3)[]
    \cell(4,4)[] \cell(5,1)[] \cell(5,2)[] \cell(5,3)[]
    \cell(5,4)[] \cell(5,5)[] \cell(6,1)[] \cell(6,2)[] \cell(6,3)[]
    \cell(6,4)[] \cell(6,5)[] \cell(6,6)[] \cell(7,1)[]
    \cell(7,2)[] \cell(7,3)[] \cell(7,4)[] \cell(8,1)[] \cell(8,2)[]
    \cell(8,3)[] \cell(9,1)[] \rownum[0,-9] \rownum[1,-7]
    \rownum[2,-6] \rownum[3,-4] \rownum[4,-2] \rownum[5,-1] \rownum[6,3]
    \rownum[7,5] \rownum[8,8] \rownum[9,10]
    \colnum[0,9] \colnum[1,7] \colnum[2,6] \colnum[3,4] \colnum[4,2] \colnum[5,1] 
  \end{pspicture}
 \caption{The shifted Ferrers diagram $\overline F$ for the Ferrers diagram $F$
    in Figure~\ref{fig:ferres}.}
  \label{fig:shifted}
\end{figure}
 
A \emph{type B permutation tableau} of length $n$ is a shifted Ferrers diagram
$\overline F$ of length $n$ together with a $0,1$-filling of $\overline F$
satisfying the following conditions (see Figure~\ref{fig:alt_typeb}):
\begin{enumerate}
\item each column has at least one $1$,
\item there is no $0$ which has a $1$ above it in the same column and a $1$ to the left of it in the same row,
\item if a $0$ is in a diagonal cell, then there is no $1$ to the left of it in
  the same row.
\end{enumerate}
Note that the condition (3) above means that if we add the copy of the tableau
below the diagonal cells reflected about the diagonal line to get a symmetric
tableau, the condition (2) still holds.

We denote by $\PTB(n)$ the set of type B permutation tableaux of length $n$. 
For $T\in\PTB(n)$, we say that a $0$ is \emph{restricted} if it has a $1$ above
it in the same column or it is in a diagonal cell. If a row of $T$ does not
contain any restricted $0$, it is called an \emph{unrestricted row}.  

\begin{figure}
  \centering
  \begin{pspicture}(-1.5,1)(6,-11)
  \multido{\n=0+1}{6}{\rput(\n,-\n){\psline{C-C}(1,0)}} \cell(1,1)[0]
    \cell(2,1)[0] \cell(2,2)[0] \cell(3,1)[1] \cell(3,2)[0] \cell(3,3)[1]
    \cell(4,1)[0] \cell(4,2)[0] \cell(4,3)[0] \cell(4,4)[0] \cell(5,1)[0]
    \cell(5,2)[0] \cell(5,3)[0] \cell(5,4)[1] \cell(5,5)[1] \cell(6,1)[0]
    \cell(6,2)[0] \cell(6,3)[0] \cell(6,4)[0] \cell(6,5)[0] \cell(6,6)[0]
    \cell(7,1)[0] \cell(7,2)[0] \cell(7,3)[0] \cell(7,4)[0] \cell(7,5)[0]
    \cell(7,6)[1] \cell(8,1)[1] \cell(8,2)[0] \cell(8,3)[1] \cell(8,4)[1]
    \cell(9,1)[0] \cell(9,2)[0] \cell(9,3)[0] \cell(9,4)[1] \cell(10,1)[0]
    \cell(10,2)[1] \cell(10,3)[1] \psline(0,0)(0,-11) \rownum[0,-10]
    \rownum[1,-9] \rownum[2,-8] \rownum[3,-6] \rownum[4,-3] \rownum[5,-2]
    \rownum[6,1] \rownum[7,4] \rownum[8,5] \rownum[9,7] \rownum[10,11]
    \colnum[0,10] \colnum[1,9] \colnum[2,8] \colnum[3,6] \colnum[4,3] \colnum[5,2] 
\end{pspicture} \hfill
\begin{pspicture}(-1.5,1)(6,-11)
 \rput(0,-1){\multido{\n=0+1}{6}{\rput(\n,-\n){\psline(1,0)}}} \cell(2,1)[]
  \cell(3,1)[\UP] \cell(3,2)[] \cell(4,1)[] \cell(4,2)[] \cell(4,3)[]
  \cell(5,1)[] \cell(5,2)[] \cell(5,3)[\LT] \cell(5,4)[\UP] \cell(6,1)[]
  \cell(6,2)[] \cell(6,3)[] \cell(6,4)[] \cell(6,5)[] \cell(7,1)[] \cell(7,2)[]
  \cell(7,3)[] \cell(7,4)[] \cell(7,5)[\LT] \cell(7,6)[\UP] \cell(8,1)[]
  \cell(8,2)[] \cell(8,3)[] \cell(8,4)[] \cell(9,1)[] \cell(9,2)[]
  \cell(9,3)[\LT] \cell(9,4)[] \cell(10,1)[\LT] \cell(10,2)[\UP] \cell(10,3)[]
\rownum[0,-10] \rownum[1,-9] \rownum[2,-8] \rownum[3,-6]
  \rownum[4,-3] \rownum[5,-2] \rownum[6,1] \rownum[7,4] \rownum[8,5]
  \rownum[9,7] \rownum[10,11] \psline{c-c}(0,-11)(0,0)(6,-6)
    \colnum[0,10] \colnum[1,9] \colnum[2,8] \colnum[3,6] \colnum[4,3] \colnum[5,2] 
\end{pspicture} \hfill
\caption{A type B permutation tableau $T$ (left) and the alternative
  representation of $T$ (right).}
  \label{fig:alt_typeb}
\end{figure}
 
The \emph{alternative representation} of $T\in\PTB(n)$ is the diagram obtained
from $T$ as follows. Firstly, we replace the topmost $1$'s with $\uparrow$'s and
the rightmost restricted $0$'s with $\leftarrow$'s and remove the remaining
$0$'s and $1$'s. Secondly, we remove the $\uparrow$'s in the diagonal and cut
off the diagonal cells as shown in Figure~\ref{fig:alt_typeb}.  Note that in the
alternative representation of $T\in\PTB(n)$, there is no arrow pointing to
another arrow. Here we consider the cutting segment as a mirror, that is, an
arrow $\uparrow$ in Column $m$ is pointing to not only the cells above it in
Column $m$ but also the cells in Row $-m$.  Conversely, for any diagram $D$
satisfying this condition, we have $T\in\PTB(n)$ whose alternative
representation is the diagram $D$. In contrast to the alternative representation
of a permutation tableau, we may have a column without a $\uparrow$, which is in
fact hidden in a diagonal cell. We note that the alternative representation of
$T\in\PTB(n)$ is in fact a half of a symmetric alternative tableau in
\cite[Section 3.5]{Nadeau}. 

From now on, we will only consider the alternative representation unless
otherwise stated. Note that Row $i$ is unrestricted if and only if Row $i$ has no
$\leftarrow$ and Column $|i|$, if it exists, has no $\uparrow$.

A \emph{type B permutation} of $[n]$ is a permutation of $[n]$ in which each
integer may be negated. For example, $4,-1,2,5,-3$ is a type B permutation of
$[5]$.  We denote by $B_n$ the set of type B permutations of
$[n]$.

Now we define a map $\CNB:\PTB(n)\to B_n$.  Let $T\in\PTB(n)$. Suppose $T$ has
$k$ columns labeled with $c_1<c_2<\cdots<c_k$. We set $\pi^{(k+1)}$ to be the
word of labels of the unrestricted rows of $T$ arranged in increasing order. For
$i$ from $k$ to $1$, we define $\pi^{(i)}$ to be the word obtained from
$\pi^{(i+1)}$ by adding some integers as follows.  If Column $c_i$ has a
$\uparrow$, let $r_0$ be the label of the row containing the $\uparrow$ and add
$c_i$ before $r_0$ in $\pi^{(i+1)}$.  If Column $c_i$ has no $\uparrow$, then do
nothing (in this case $-c_i$ should be already in $\pi^{(i+1)}$). If
$r_1<\cdots<r_s$ are the labels of the rows containing a $\leftarrow$ in Column
$c_i$, we add $r_1,\ldots,r_s$ in this (increasing) order before $c_i$ or $-c_i$
in $\pi$.  We define $\CNB(T)=\pi^{(1)}$.  For example, if $T$ is the
permutation tableau in Figure~\ref{fig:alt_typeb}, then
\begin{equation}
  \label{eq:7}
\CNB(T)=9,7,10,6,2,1,-3,5,-8,4,11.
\end{equation}

The map $\CNB:\PTB(n)\to B_n$ is a generalization of $\CN:\PT(n)\to S_n$ in the
sense that if $T\in\PTB(n)$ has no diagonal $1$, then we can consider $T$ as an
element in $\PT(n)$ and the map $\CNB$ on such $T$ is identical with $\CN$.

\begin{prop}\label{thm:12}
  The map $\CNB:\PTB(n)\to B_n$ is a bijection.
\end{prop}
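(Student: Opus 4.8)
The plan is to prove that $\CNB$ is a bijection by exhibiting an explicit inverse map $\invCNB : B_n \to \PTB(n)$, mirroring the strategy Corteel and Nadeau used for $\CN$. Given $\pi \in B_n$, I would reconstruct the type B permutation tableau column by column, from right to left, reading off which row each column label sits above and which rows carry a $\leftarrow$ in that column. Concretely, I would process the ``columns'' of $\pi$ in decreasing order of absolute value: for a value $c$ appearing with positive sign, interpret the maximal block of integers immediately preceding $c$ that are smaller than $c$ (in a suitable sense matching the forward construction) as the $\leftarrow$-rows of Column $c$, and the value following that block as the $\uparrow$-row; for a value $-c$ appearing with negative sign, this signals a column whose $\uparrow$ is hidden in the diagonal cell, so no $\uparrow$ is placed but the preceding block still gives the $\leftarrow$-rows. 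After all columns are processed, the remaining entries are precisely the labels of the unrestricted rows.

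The key steps, in order, would be: (i) make precise the ``peeling'' procedure that, from $\pi$, identifies for each column label its $\uparrow$-row and its set of $\leftarrow$-rows, and verify this is well-defined (the relevant block of smaller integers is uniquely determined); (ii) check that the diagram $D$ produced has the defining property of an alternative representation of a type B permutation tableau — namely no arrow points to another arrow, with the diagonal segment acting as a mirror — so that $D$ genuinely corresponds to some $T \in \PTB(n)$; (iii) verify $\CNB(\invCNB(\pi)) = \pi$ by tracing the forward construction and observing that reading columns left to right undoes the right-to-left peeling; and (iv) verify $\invCNB(\CNB(T)) = T$, i.e. that the blocks peeled off are exactly the blocks of $\leftarrow$-rows and $\uparrow$-rows that were inserted. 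Since both $\PTB(n)$ and $B_n$ have cardinality $2^n n!$ (the former by the bijection with fillings of shifted Ferrers diagrams, or simply because it will follow from the injectivity argument), it actually suffices to prove one direction, e.g. that $\invCNB \circ \CNB = \mathrm{id}$, which gives injectivity of $\CNB$ on a finite set of the right size.

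The main obstacle will be the bookkeeping around columns with no $\uparrow$ — those whose topmost $1$ lies in a diagonal cell and hence whose label appears negated in $\pi$. In the forward map, such a column $c_i$ contributes its $\leftarrow$-rows $r_1 < \cdots < r_s$ inserted immediately before the already-present entry $-c_i$, but contributes no new ``anchor'' integer of its own; so when inverting, one must correctly recognize the value $-c_i$ (which entered $\pi$ earlier, as the label of some unrestricted row of the symmetric completion, or rather was placed when no $\uparrow$ existed) and attach the preceding smaller-integer block to Column $c_i$ rather than to whatever larger column comes next. I would handle this by processing absolute values in strictly decreasing order and, at each step, testing the sign of the current largest-absolute-value unprocessed entry: positive sign triggers the ``insert $\uparrow$ and peel the $\leftarrow$-block before it'' rule, negative sign triggers the ``no $\uparrow$, peel the $\leftarrow$-block before $-c$'' rule. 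Checking that these two rules are consistent with conditions (1)–(3) in the definition of $\PTB(n)$ — in particular that a $\leftarrow$-row never ends up pointing past a diagonal $0$, which is exactly condition (3) — is the delicate point, and I expect it to reduce, via Lemma~\ref{lem:topone}-type reasoning about RL-minima and RL-maxima adapted to the type B setting, to a routine but careful case analysis.
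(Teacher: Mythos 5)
Your high-level strategy — construct an explicit inverse $\invCNB$ by peeling off, for each column label, the block of $\leftarrow$-rows and the $\uparrow$-row that the forward map inserted — is exactly the paper's strategy. But there is a concrete error in how you order the peeling, and it breaks the rules you state. The forward map $\CNB$ processes columns from the \emph{largest} label $c_k$ down to the \emph{smallest} $c_1$, so the insertions for smaller columns happen \emph{later} and can land inside or adjacent to the blocks already placed for larger columns. Consequently the inverse must peel in \emph{increasing} order of absolute value (last inserted, first removed), which is what the paper does: it lists the signed descents $\pi_{d_1},\dots,\pi_{d_k}$ with $|\pi_{d_1}|<\cdots<|\pi_{d_k}|$ and processes $d_1$ first, using Lemma 4.2(1) to guarantee each $\pi_{d_i}$ is still a signed descent of the reduced word $\pi^{(i)}$ when its turn comes. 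You propose the opposite order ("processing absolute values in strictly decreasing order"), and with that order your identification rules fail already in type A: for the tableau with columns $3,2$ and row $1$ and both $\uparrow$'s in Row $1$, one has $\CN(T)=3,2,1$; peeling the largest value $3$ first, the entry following $3$ is $2$, which is a column label, not the $\uparrow$-row of Column $3$ (that row is $1$, exposed only after Column $2$'s block has been removed). More generally, the block of $\leftarrow$-rows belonging to the largest column can have entire blocks for smaller columns interleaved into it, so "the maximal block of integers immediately preceding $c$" does not recover the correct set of rows.

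Two smaller points. First, you never say how to decide which entries of $\pi$ are column labels at all; the paper does this via the notion of signed descent and the induced shape of $\pi$, which is what makes the case split (positive anchor with an $\uparrow$ versus negative anchor with the $\uparrow$ hidden in the diagonal) well defined. Second, your cardinality shortcut ($|\PTB(n)|=|B_n|=2^n n!$, so one composition suffices) is legitimate, but it does not rescue the argument: you still need a correctly specified $\invCNB$ with $\invCNB\circ\CNB=\mathrm{id}$, and that requires fixing the peeling order as above.
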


We will prove Proposition~\ref{thm:12} by constructing the inverse of $\CNB$.
In order to do this we need the following definitions.  For
$\pi=\pi_1\pi_2\cdots\pi_n\in B_n$ and $i\in[n]$, we say that $\pi_i$ is a
\emph{descent} of $\pi$ if $\pi_i>\pi_{i+1}$, and an \emph{ascent} otherwise,
where $\pi_{n+1}=n+1$. Note that our definition is different from the usual one
which uses the index $i$ as a descent or an ascent instead of the value
$\pi_i$. We also say that $\pi_i$ is a \emph{signed descent} of $\pi$ if
$\pi_i<0$ or $\pi_i>|\pi_{i+1}|$, and a \emph{signed ascent} otherwise. The
\emph{shape} of $\pi$ is the shifted Ferrers diagram such that $|\pi_i|$ is the
label of a column if and only if $\pi_i$ is a signed descent of $\pi$. For
example, the shape of the type B permutation in \eqref{eq:7} is the same as
the shape of the type B permutation tableau in Figure~\ref{fig:alt_typeb}.

Now we define a map $\invCNB:B_n\to\PTB(n)$. Given
$\pi=\pi_1\pi_2\cdots\pi_n\in B_n$, we first set $T$ to be the empty tableau
with the same shape as $\pi$. Let $\pi_{d_1},\pi_{d_2},\ldots,\pi_{d_k}$ be the
signed descents of $\pi$ with $|\pi_{d_1}|<|\pi_{d_2}|<\cdots<|\pi_{d_k}|$. Let
$\pi^{(1)}=\pi$.  For $i$ from $1$ to $k$, we define $\pi^{(i+1)}$ using
$\pi^{(i)}$ as follows. During the process, we will fill the cells in Column
$|\pi_{d_i}|$ of $T$. Suppose $\pi^{(i)}=\sigma_1\sigma_2\cdots\sigma_{\ell}$
and $\pi_{d_i}=\sigma_d$.

\begin{description}
\item[Case 1] If $\sigma_d<0$, let $r$ be the smallest integer with
  $\sigma_d<\sigma_r<\sigma_{r+1}<\cdots<\sigma_{d-1}<|\sigma_{d}|$. We fill the
  $(\sigma_j,|\sigma_d|)$-entry with a $\leftarrow$ for each $r\leq
  j\leq d-1$. Then $\pi^{(i+1)}$ is the word obtained from $\pi^{(i)}$ by
  removing $\sigma_j$ for all $r\leq j\leq d-1$.
\item[Case 2] If $\sigma_d>0$, then we have $\sigma_d>|\sigma_{d+1}|$ because
  $\pi_{d_i}=\sigma_d$ is a signed descent of $\pi^{(i)}$ (this will be proved
  in Lemma~\ref{thm:2}). Let $r$ be the smallest integer with
  $\sigma_{d+1}<\sigma_r<\sigma_{r+1}<\cdots<\sigma_{d-1}<\sigma_{d}$. We fill
  the $(\sigma_{d+1},\sigma_d)$-entry with a $\uparrow$ and the
  $(\sigma_j,\sigma_d)$-entry with a $\leftarrow$ for each $r\leq
  j\leq d-1$. Then $\pi^{(i+1)}$ is the word obtained from $\pi^{(i)}$ by
  removing $\sigma_j$ for all $r\leq j\leq d$.
\end{description}
Repeat this process until we get $\pi^{(k+1)}$ and define $\invCNB(\pi)$ to be
the resulting tableau $T$.  For example, if $\pi$ is the permutation in
\eqref{eq:7}, then $\invCNB(\pi)$ is the permutation tableau in
Figure~\ref{fig:alt_typeb}.  From the construction of $\invCNB$ it is easy to
see that $\pi^{(k+1)}$ consists of the integers which are the labels of the
unrestricted rows of $\invCNB(\pi)$.

\begin{lem}\label{thm:2}
  Following the notations in the definition of $\invCNB$, we have the following.
  \begin{enumerate}
  \item For each $i\in[k]$, $\pi_{d_i}$ is a signed descent in $\pi^{(i)}$.
  \item $\pi^{(k+1)}$ is the word of the RL-minima of $\pi$ arranged in
    increasing order. 
\end{enumerate}
\end{lem}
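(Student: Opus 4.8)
The plan is to prove both items simultaneously by induction on $i$, tracking what each step of $\invCNB$ does to the word $\pi^{(i)}$. The key structural claim to carry through the induction is the following: for each $i$, the word $\pi^{(i)}$ is obtained from $\pi$ by deleting certain entries, and moreover $\pi^{(i)}$ \emph{retains all RL-minima of $\pi$ in their original relative order}, while the entries deleted in passing from $\pi^{(i)}$ to $\pi^{(i+1)}$ are never RL-minima of $\pi$. Granting this claim, item~(2) is immediate: after all $k$ steps we have removed every non-RL-minimum (since the signed descents $\pi_{d_1},\dots,\pi_{d_k}$ account for every entry that is either negative or strictly larger than the absolute value of its successor, and one checks that an entry survives to $\pi^{(k+1)}$ exactly when it is a RL-minimum), and $\pi^{(k+1)}$ is sorted in increasing order precisely because the RL-minima of any word, read left to right, form an increasing sequence. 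So the real content is item~(1) together with the structural claim.

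First I would record the base case $i=1$, which is trivial since $\pi^{(1)}=\pi$ and $\pi_{d_1}$ is by definition a signed descent of $\pi$. For the inductive step, assume $\pi_{d_i}$ is a signed descent of $\pi^{(i)}$; I need to show two things: (a) the entries removed in Case 1 or Case 2 are not RL-minima of $\pi$, and (b) $\pi_{d_{i+1}}$ is a signed descent of $\pi^{(i+1)}$. For (a): in Case 1, writing $\pi_{d_i}=\sigma_d<0$, the removed entries are $\sigma_r,\dots,\sigma_{d-1}$ with $\sigma_d<\sigma_r<\cdots<\sigma_{d-1}<|\sigma_d|$; since $\sigma_d$ appears later in $\pi^{(i)}$ and (by the induction hypothesis on order-preservation) also appears later in $\pi$, and $\sigma_d<\sigma_j$ for all these $j$, none of $\sigma_r,\dots,\sigma_{d-1}$ can be a RL-minimum of $\pi$ — there is a smaller value to their right. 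Wait, I must be careful: a negative value can be small, so I should instead argue that each removed $\sigma_j$ has $\sigma_{j+1}<\sigma_j$ to its immediate right within the run (for $j<d-1$), or $\sigma_d<\sigma_{d-1}$ to its right, hence it is not even a RL-minimum of $\pi^{(i)}$, and since deleting entries only removes potential witnesses, I should rather phrase the structural invariant directly in terms of RL-minima of $\pi$ and use that the run $\sigma_r<\cdots<\sigma_{d-1}$ all exceed $\sigma_d$ which lies to their right in $\pi$. Case 2 is analogous, with the extra removed entry $\sigma_{d+1}$ satisfying $\sigma_{d+1}<\sigma_d$ and, since in Case 2 we also delete $\sigma_d$ itself, I note $\sigma_d>0$ and $\sigma_d>|\sigma_{d+1}|\ge \sigma_{d+1}$, and $\sigma_{d+1}$ is smaller than everything removed before it and $\sigma_d$ is removed too — here I should check that whatever now sits immediately after the deleted block in $\pi^{(i+1)}$ does not accidentally make some surviving entry to the left into a new signed descent incorrectly; this is exactly where item~(1) for the next index comes in.

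For (b), I would compare the signed descents of $\pi^{(i+1)}$ with those of $\pi^{(i)}$. Deletion of a contiguous block (in Case 2, the block $\sigma_r\cdots\sigma_d$; in Case 1, $\sigma_r\cdots\sigma_{d-1}$) can only affect the signed-descent status of the entry immediately preceding the block, namely $\sigma_{r-1}$, and of entries outside the block not at all. By the choice of $r$ as the \emph{smallest} index with the stated increasing-run property, $\sigma_{r-1}$ fails the inequality defining the run, and I would check case-by-case that this forces $\sigma_{r-1}$ to be a signed descent of $\pi^{(i+1)}$ already (or to retain whatever status it had), so that no signed descent of $\pi$ with absolute value larger than $|\pi_{d_i}|$ is destroyed — in particular $\pi_{d_{i+1}}$, which has the smallest absolute value among the remaining signed descents, is still a signed descent of $\pi^{(i+1)}$. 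I expect the main obstacle to be precisely this bookkeeping in Case 2 when $\sigma_d$ is deleted: I must verify that the new neighbour relation $\sigma_{r-1}\,\sigma_{d+1}$ (after deletion) does not spuriously turn $\sigma_{r-1}$ into a signed ascent when it should be a descent, and the minimality of $r$ — giving $\sigma_{r-1}\not<\sigma_r$ when $\sigma_{r-1}>0$, or $\sigma_{r-1}<0$ outright — is exactly the fact that resolves it. Once this local analysis is pinned down, both items follow by induction.
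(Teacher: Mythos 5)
Your overall architecture---induct on the steps of $\invCNB$, observe that deleting a contiguous block can only change the signed-descent status of the entry $\sigma_{r-1}$ immediately preceding it, and track RL-minima separately for item (2)---is sound and is essentially the paper's strategy. The gap is in the mechanism you propose for the crucial step (b): the minimality of $r$ does \emph{not} force $\sigma_{r-1}$ to retain or acquire signed-descent status. Concretely, in Case 2 take $\sigma_d=10$, $\sigma_{d+1}=-9$, $\sigma_r=3$, $\sigma_{r-1}=5$: the run $-9<3<10$ cannot be extended to $5$ (since $5>3$), so $r$ is minimal; before the deletion $5>|3|$ makes $5$ a signed descent, yet after deleting $3,10$ its new neighbour is $-9$ and $5<|{-9}|$ makes it a signed ascent. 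So the case analysis you describe does not close. (One can show such a configuration never actually arises in a run of the algorithm, because no new \emph{positive} signed descents are ever created---but that is an additional global invariant you would have to prove, not a consequence of the choice of $r$.)

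What actually rescues the statement is a magnitude bound you never isolate, and it is the one-line observation the paper's proof rests on: every entry deleted at step $i$ lies between $-|\pi_{d_i}|$ and $|\pi_{d_i}|$, and the new right-neighbour of $\sigma_{r-1}$ (namely $\sigma_d$ in Case 1, $\sigma_{d+1}$ in Case 2) has absolute value at most $|\pi_{d_i}|$. Consequently, if $\sigma_{r-1}$ is one of the still-unprocessed signed descents $\pi_{d_{i'}}$ with $i'>i$, then either it is negative (hence automatically a signed descent) or it exceeds $|\pi_{d_i}|$ and therefore exceeds the absolute value of its new neighbour, so it remains a signed descent; if $\sigma_{r-1}$ is not one of the $\pi_{d_{i'}}$, its status is irrelevant to the lemma. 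The same bound $|\pi_{d_j}|<|\pi_{d_i}|$ for $j<i$ is what guarantees $\pi_{d_i}$ is never deleted in the first place, a point you use implicitly but should state. Once the argument is rerouted through this inequality rather than through the minimality of $r$, your induction (and your treatment of item (2), which is fine modulo the same preservation-of-descent-status fact) goes through.
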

\begin{proof}
  (1) It is enough to show the following: for $j<i$, when we go from $\pi^{(j)}$
  to $\pi^{(j+1)}$, $\pi_{d_i}$ is not deleted and it remains a signed
  descent. First observe that we have $|\pi_{d_j}|<|\pi_{d_i}|$ by definition.
  Since we only delete integers between $-|\pi_{d_j}|$ and $|\pi_{d_j}|$, $\pi_{d_i}$ is not
  deleted. If $\pi_{d_i}<0$, it is a signed descent. Otherwise, the integer to
  the right of $\pi_{d_i}$ may change. In this case, however, the integer to the
  right of $\pi_{d_i}$ in $\pi^{(j+1)}$ is smaller than $|\pi_{d_j}|$ by the
  construction of $\invCNB$. Thus $\pi_{d_i}$ remains a signed descent in
  $\pi^{(j+1)}$.

(2) Similarly, we can prove the following.
\begin{itemize}
\item For an integer $g$ in $\pi^{(i)}$, $g$ is a descent in $\pi^{(i)}$
  if and only if $g$ is a descent in $\pi$.
\item $\pi^{(k+1)}$ does not have any descent of $\pi$.
\item $\pi^{(k+1)}$ contains all RL-minima of $\pi$.
\end{itemize}
Using the above three statements one can easily derive (2).
\end{proof}

By Lemma~\ref{thm:2}, $\pi^{(k+1)}$ is the RL-minima of $\pi$ arranged in
increasing order and these integers are the labels of the unrestricted rows of
$T=\invCNB(\pi)$. Now it is easy to see that $\CNB(T)=\pi$. Hence we obtain that
the map $\CNB:\PTB(n)\to B_n$ is a bijection with inverse $\CNB^{-1}=\invCNB$.
This completes the proof of Proposition~\ref{thm:12}. 

Now we will find some properties of $\CNB$ analogous to $\CN$. 

Let $T\in\PTB(n)$ and let Row $m$ be the topmost unrestricted row of $T$.  We
denote by $\diag(T)$ the number of columns without a $\uparrow$, by $\urr(T)$
the number of unrestricted rows and by $\toponez(T)$ the number of arrows in Row
$m$ and Column $|m|$. Equivalently, in the original permutation tableau
representation $\diag(T)$ is the number of diagonal cells containing a $1$ and
$\toponez(T)$ is the number of $1$'s in Row $m$ except the $1$ in the diagonal
cell, if it exists, plus the number of rightmost restricted $0$'s in Column
$|m|$. Note that if $m>0$, then we must have $m=1$ and $\toponez(T)$ is simply
the number of $1$'s in Row $1$.

\begin{example}
  Let $T$ be the type B permutation tableau in Figure~\ref{fig:alt_typeb}.
  Then the topmost unrestricted row of $T$ is Row $-8$ and $\diag(T)=2$,
  $\urr(T)=3$ and $\toponez(T)=1+2=3$.
\end{example}

The following is a type B analog of Lemma~\ref{lem:topone}.

\begin{prop}\label{thm:1}
  Let $T\in\PTB(n)$ and $\pi=\CNB(T)$ with $\pi$ decomposed as $\pi=\sigma\tau
  m \rho$, where $\min(\pi)=m$, the last element of $\sigma$ is greater than
  $|m|$ and each element of $\tau$ is smaller than $|m|$. Then we have the
  following.
  \begin{enumerate}
  \item Row $m$ is the topmost unrestricted row of $T$.
 \item Column $t$ has no $\uparrow$ if and only if $-t$ appears in $\pi$.
 \item The labels of the unrestricted rows of $T$ are exactly the RL-minima of
    $\pi$.
 \item The labels of the columns of $T$ with a $\uparrow$ in Row $m$ are
    exactly the RL-maxima of $\sigma$. Here $\sigma$ can be empty.
  \item The labels of the rows of $T$ containing a $\leftarrow$ in Column $|m|$
    are exactly the RL-minima of $\tau$. Here $\tau$ can be empty.
  \item We have $\neg(\pi) = \diag(T)$.
\end{enumerate}
\end{prop}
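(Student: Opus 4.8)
The plan is to establish the six items of Proposition~\ref{thm:1} more or less in the order stated, since the later items build on the structural picture set up by the earlier ones. First I would recall that $\CNB$ is built column by column from the word $\pi^{(k+1)}$ of unrestricted row labels, and that the inverse map $\invCNB$ together with Lemma~\ref{thm:2} already tells us that $\pi^{(k+1)}$ is exactly the set of RL-minima of $\pi=\CNB(T)$, arranged increasingly. This immediately gives item (3). For item (1), the topmost unrestricted row has the smallest label among unrestricted rows — but labels of added (diagonal) rows are negative and labels of ordinary rows are positive, and an unrestricted row with negative label $-i$ has its diagonal cell empty, so the \emph{smallest} label, which is $\min(\pi)=m$, is the topmost unrestricted row. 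One must check the edge case $m>0$: then there are no negative unrestricted row labels, forcing $m=1$, consistent with the remark in the text.

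Next, item (2) is essentially the definition of the shape/signed-descent correspondence combined with how $\CNB$ treats a column with no $\uparrow$: such a column $c$ is "hidden" in the diagonal cell of Row $-c$, and in the construction $-c$ is placed into the word rather than $c$; conversely a column \emph{with} a $\uparrow$ contributes the positive label $c$. So I would argue that $-t$ appears in $\pi$ iff Column $t$ has no $\uparrow$, tracking through one step of the construction. Item (6), $\neg(\pi)=\diag(T)$, is then an immediate corollary of (2): the negative entries of $\pi$ are precisely the labels $-t$ coming from columns without a $\uparrow$, and $\diag(T)$ counts exactly those columns.

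For items (4) and (5) I would mimic the proof of Lemma~\ref{lem:topone}(2) (Nadeau's argument) but carried out at the "cut point" determined by Row $m$. Decompose $\pi=\sigma\tau m\rho$ as in the statement. The key observation is that when the construction of $\CNB$ reaches the stage just before processing columns that interact with Row $m$, the RL-maxima to the left of $m$ are exactly the labels of columns already having a $\uparrow$ in Row $m$ (by the same RL-maximum bookkeeping as in type A), and these sit in $\sigma$ because they exceed $|m|$; meanwhile the $\leftarrow$'s inserted into Column $|m|$ contribute row labels smaller than $|m|$, which land in $\tau$, and they are inserted in increasing order immediately before $m$, so they are precisely the RL-minima of $\tau$. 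I would need to verify that no later column insertion disturbs this picture — that nothing smaller than $|m|$ is added to the left of the last entry of $\sigma$ and nothing is inserted between $\tau$ and $m$ that would change which entries of $\sigma$ are RL-maxima or which entries of $\tau$ are RL-minima. This monotonicity/non-interference check is where the real work lies: the main obstacle is keeping careful track, across the whole column-by-column construction, of exactly which integers get inserted where relative to the block $\tau m$, using the "increasing order before $c_i$" rule and the fact that Row $m$ being unrestricted means Column $|m|$ has no $\uparrow$. Once that is in hand, items (4) and (5) follow, and the proposition is complete.
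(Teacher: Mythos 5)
Your plan matches the paper's proof in all essentials: (2) and (6) read off the construction, (3) via the fact that no step creates a new RL-minimum (you route this through Lemma~\ref{thm:2} and the inverse map, the paper argues directly on $\CNB$, but these are equivalent), (1) from (3) since the topmost unrestricted row has the smallest label, and (4)--(5) by tracking the decomposition $\sigma^{(i)}\tau^{(i)}m\rho^{(i)}$ through the column-by-column construction. The ``non-interference check'' you correctly flag as the real work is exactly what the paper isolates as its Observation --- each step inserts an increasing sequence $a_1<\cdots<a_s$ before some $b$ with $b<a_s\leq c_i$ --- from which the stability of $\RLmax(\sigma^{(i)})$ and $\RLmin(\tau^{(i)})$ follows.
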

\begin{proof}
  Let $T$ have $k$ columns labeled with $c_1<c_2<\cdots<c_k$.  Using the
  notations in the definition of $\CNB$, we can observe the following.

  \textbf{Observation}: When we obtain $\pi^{(i)}$ from $\pi^{(i+1)}$, we add an
  increasing sequence of integers $a_1<\cdots<a_s$ before $b$ for some integers
  $a_1,\dots,a_s,b$ satisfying $b<a_s\leq c_i$.

  Then (2) is obvious from the definition of $\CNB$ and (3) follows from the
  fact that we do not create any new RL-minimum when we obtain $\pi^{(i)}$ from
  $\pi^{(i+1)}$ by the above observation. Since the label of the topmost
  unrestricted row is smaller than that of any other unrestricted row, by (3) we obtain
  (1). 

  Now we prove (4) and (5). Let $t$ be the integer with $c_t=|m|$ if $m<0$ and
  $t=k+1$ if $m=1$.  Since $\pi^{(k+1)}$ is the word of the RL-minima of $\pi$
  arranged in increasing order, the first element of $\pi^{(k+1)}$ is $m$. Thus
  for each $i\in[k+1]$ we can decompose $\pi^{(i)}$ as $\pi^{(i)} = \sigma^{(i)}
  \tau^{(i)} m \rho^{(i)}$, where the last element of $\sigma^{(i)}$ is greater
  than $|m|$ and each element of $\tau^{(i)}$ is smaller than $|m|$.

  Using the observation, it is not difficult to see that
  $\RLmax(\sigma^{(i)})=\RLmax(\sigma^{(i+1)})\cup\{|c_i|\}$ if Column $|c_i|$
  has a $\uparrow$ in Row $m$, and $\RLmax(\sigma^{(i)})=\RLmax(\sigma^{(i+1)})$
  otherwise. Thus we get (4). Using the observation again, one can also easily
  see that $\tau^{(k+1)}=\tau^{(k)}=\cdots=\tau^{(t-1)}=\emptyset$, $\tau^{(t)}$
  is the increasing sequence of labels of the rows of $T$ containing a
  $\leftarrow$ in Column $|m|$ and
  $\RLmin(\tau^{(t)})=\RLmin(\tau^{(t-1)})=\cdots=\RLmin(\tau^{(1)})$.  Thus we
  get (5).

  From the construction of $\CNB$, (6) is obvious. 
\end{proof}

Now we are ready to prove the following theorem.

\begin{thm}\label{thm:gf_typeB}
 We have
$$\sum_{T\in\PTB(n)} x^{\urr(T)-1} y^{\toponez(T)}  z^{\diag(T)} = (1+z)^n (x+y)_{n-1}.$$
\end{thm}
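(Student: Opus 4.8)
The strategy is to mimic the first proof of Theorem~\ref{thm:CN}, using the bijection $\CNB:\PTB(n)\to B_n$ and Proposition~\ref{thm:1} in place of $\CN$ and Lemma~\ref{lem:topone}. Since both sides are polynomials in $x$, $y$, $z$ (the $z$-degree is bounded by $n$, and the $x,y$-degrees by $n-1$), it suffices to verify the identity for all positive integers $x,y$ and all nonnegative integers $z$. Actually, because $(1+z)^n$ appears as an honest polynomial factor, I would first separate off the role of $z$: I claim that $\sum_{T}x^{\urr(T)-1}y^{\toponez(T)}z^{\diag(T)}$ factors, in a suitable sense, because the $\diag(T)$ diagonal cells containing a $1$ (equivalently, by Proposition~\ref{thm:1}(6), the negated letters of $\CNB(T)$) can be chosen essentially freely. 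More precisely, I would fix a positive integer $z+1=:p$ and interpret $z^{\diag(T)}$ combinatorially by attaching to each of the $\diag(T)$ ``negatable'' columns one of $z$ colors (or leaving the ``positive'' choice, giving the factor $(1+z)$ per column, hence $(1+z)^n$ overall once we account for all $n$ columns-or-diagonal-rows); this is the standard device turning $(1+z)^n$ into a sum over subsets.

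Concretely, here is the construction. Given $T\in\PTB(n)$, build $T'\in\PTB(N)$ with $N=n+(x-1)+(y-1)$ by adding $y-1$ rows at the top and $x-1$ columns at the end, filling the first entries of the last $x-1$ columns with $\uparrow$'s (in the alternative representation). Let $U$ be the set of labels of columns of $T$ with a $\uparrow$ in the topmost unrestricted row $m$, and let $L$ be the set of labels of unrestricted rows of $T$ other than Row $m$ (and the $\leftarrow$'s in Column $|m|$, which by Proposition~\ref{thm:1}(5) correspond to RL-minima of $\tau$). Exactly as before, the $\uparrow$'s indexed by $U$ can be slid up into any of $y$ positions, and for each $\ell\in L$ one may or may not place a $\leftarrow$ in one of $x-1$ new columns; this produces $x^{\urr(T)-1}y^{\toponez(T)}$ tableaux for each $T$. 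The resulting tableaux in $\PTB(N)$ are characterized by: each integer in $\{1,\dots,y\}$ labels an unrestricted row, and each integer in $\{N-x+2,\dots,N\}$ labels a column with a $\uparrow$ in its first row — but for the type B case one must be careful about whether the topmost unrestricted row is Row $1$ or a negative row, and the $y$ candidate top rows should be taken as the rows labeled (in the shifted diagram) so that Proposition~\ref{thm:1}(1),(4),(5) apply uniformly. By Proposition~\ref{thm:1}, under $\CNB$ these $\PTB(N)$-tableaux correspond exactly to $\pi\in B_N$ in which the letters $N,N-1,\dots,N-x+2,1,2,\dots,y$ appear in this order; crucially $\diag$ translates (by part (6)) into the number of negated letters, which among these $N=n+x+y-2$ letters is unconstrained on the $n$ ``free'' letters and forced to be positive on the $x+y-2$ pinned ones — wait, actually the pinned letters $1,\dots,y$ are positive by construction but $N,\dots,N-x+2$ could still be negated, so I must check this carefully and fold the $(1+z)$ factors in accordingly.

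Counting such signed permutations: the relative order of the $x+y-1$ special letters is fixed, so ignoring signs there are $N!/(x+y-1)! = (x+y)_{n-1}$ choices (by the same computation as in Theorem~\ref{thm:CN}), and each of the $n$ ``free'' letters independently contributes a factor $(1+z)$ via the sign-weight $z^{\neg}$ — giving $(1+z)^n$ — once one checks that the sign of every pinned letter is forced (e.g.\ the letters $1,\dots,y-1$ must be RL-minima hence appear before any smaller-absolute-value letter, and letters that are RL-maxima of the prefix cannot be negative by the signed-descent condition built into the shape). \textbf{The main obstacle} I anticipate is precisely this sign-bookkeeping: verifying that under the correspondence of Proposition~\ref{thm:1} the weight $z^{\diag(T')}$ on the enlarged tableau equals $z^{\neg(\pi)}$ where $\pi$ ranges over signed permutations whose \emph{unpinned} letters carry free signs while the $x+y-1$ \emph{pinned} letters are sign-forced, so that summing over signs contributes exactly $(1+z)^n$ and not $(1+z)^N$. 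Once that is nailed down, the identity $\sum = (1+z)^n\cdot(x+y)_{n-1}$ drops out, and a short final paragraph upgrades from positive-integer $x,y,z$ to the polynomial identity.
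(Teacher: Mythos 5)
Your overall strategy is indeed the paper's: enlarge $T$ to a tableau of length $N=n+x+y-2$, transport the defining conditions through $\CNB$ via Proposition~\ref{thm:1}, and count signed permutations with a pinned subsequence. But the step you yourself flag as ``the main obstacle'' --- the sign-bookkeeping --- is exactly where the proposal is incomplete, and your tentative resolution points the wrong way. In type B, when $\diag(T)\ne 0$ the topmost unrestricted row is a \emph{negative} row, Row $m$ with $m<0$, so the $y-1$ new rows must be inserted above Row $m$ as negative rows; in the shifted diagram this forces the simultaneous insertion of $y-1$ new columns to the left of Column $|m|$, and these new columns have their $\uparrow$ hidden in the diagonal, so $\diag(Q)=\diag(T)+y-1$ and the weight must be corrected by $z^{-(y-1)}$ --- a correction absent from your sketch. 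Correspondingly, the small pinned letters of $\pi=\CNB(Q)$ are not $1,\dots,y$ but $M,M+1,\dots,M+y-1$ with $M=\min(\pi)\le -y$: they are forced \emph{negative}, while the large pinned letters $N,\dots,N-x+2$ are forced positive (they label columns with a $\uparrow$, hence appear unsigned by Proposition~\ref{thm:1}(2)). The set $A$ of negated absolute values then satisfies $A\subset[n+y-1]$ with its largest $y$ elements consecutive, and the key device is the bijection $A\mapsto A'$ (delete the largest $y-1$ elements) onto \emph{nonempty} subsets of $[n]$, with $\neg(\pi)=|A'|+y-1$ cancelling the $z^{-(y-1)}$ correction and with $N!/(x+y-1)!=(x+y)_{n-1}$ permutations for each fixed $A'$. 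Your heuristic that each of the $n$ free letters independently contributes a factor $(1+z)$ is not literally true letter by letter; it only emerges after this reindexing.

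A second, smaller gap: you need the case split on $\diag(T)=0$ versus $\diag(T)\ne0$. When $\diag(T)=0$ the topmost unrestricted row is Row $1$, there is no Column $|m|$, and the construction above does not apply; the paper disposes of this case by identifying such $T$ with type A permutation tableaux and invoking Theorem~\ref{thm:CN}, which supplies exactly the missing $A'=\emptyset$ term in $\sum_{A'\subset[n]}z^{|A'|}(x+y)_{n-1}=(1+z)^n(x+y)_{n-1}$. Without both of these points the argument does not close.
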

Note, if $z=0$, the above theorem reduces to Theorem~\ref{thm:CN} by identifying
a permutation tableau with a type B permutation tableau without any diagonal
$1$. We give two proofs of this theorem. These are type B analogs of the two
proofs of Theorem~\ref{thm:CN}.

\begin{proof}[First proof of Theorem~\ref{thm:gf_typeB}]
 \begin{figure}
    \centering
 \begin{pspicture}(-13,0)(12,-20.5)
\rput(-12,-5){
\multido{\n=1+1}{7}{\rput(\n,-\n){\psline(-1,0)}}
\rput(-5,5){\pspolygon[linewidth=3pt](10,-10)(12,-12)(12,-14)(11,-14)(11,-16)(10,-16)}
\rput(-3,5){\pspolygon[linewidth=3pt](7,-10)(7,-16)(6,-16)(6,-17)(4,-17)(4,-18)(3,-18)(3,-10)}
\rput(-.5,-4.5){$m$}
\rput(4.5,-11.5){$|m|$}
\psline(7,-7)(0,0)(0,-14)
\psline[linewidth=3pt](0,-13)(0,-14)
\rput(0,-1){
\cell(1,1)[] \cell(2,1)[] \cell(2,2)[] \cell(3,1)[] \cell(3,2)[] \cell(3,3)[] \cell(4,1)[] \cell(4,2)[\UP] \cell(4,3)[] \cell(4,4)[\UP] \cell(5,1)[\UP] \cell(5,2)[] \cell(5,3)[] \cell(5,4)[] \cell(5,5)[] \cell(6,1)[] \cell(6,2)[] \cell(6,3)[] \cell(6,4)[] \cell(6,5)[\LT] \cell(6,6)[] \cell(7,1)[] \cell(7,2)[] \cell(7,3)[\UP] \cell(7,4)[] \cell(7,5)[] \cell(7,6)[] \cell(7,7)[] \cell(8,1)[] \cell(8,2)[] \cell(8,3)[] \cell(8,4)[] \cell(8,5)[\LT] \cell(8,6)[] \cell(8,7)[] \cell(9,1)[] \cell(9,2)[] \cell(9,3)[] \cell(9,4)[] \cell(9,5)[] \cell(9,6)[\LT] \cell(10,1)[] \cell(10,2)[] \cell(10,3)[] \cell(10,4)[] \cell(10,5)[\LT] \cell(10,6)[] \cell(11,1)[] \cell(11,2)[\LT] \cell(11,3)[] \cell(12,1)[\LT] }
\psframe[linewidth=2pt, linecolor=red](0,-4)(4,-5)
\psframe[linewidth=2pt, linecolor=red](4,-5)(5,-11)
}

\rput(8,-16.5){\psscaleboxto(2,.5){\rotateleft{\{}}}
\rput(8,-17.2){$y-1$}

\rput(-.5,-8){\psscaleboxto(.5,2){\{}}
\rput[r](-1,-8){$y-1$}

\rput(1.5,-19.5){\psscaleboxto(3,.5){\rotateleft{\{}}}
\rput(1.5,-20.2){$x-1$}

\multido{\n=1+1}{12}{\rput(\n,-\n){\psline(-1,0)}}
\psline (0,-19)(0,0)(12,-12)
\pspolygon[linewidth=3pt](10,-10)(12,-12)(12,-14)(11,-14)(11,-16)(10,-16)
\pspolygon[linewidth=3pt](7,-10)(7,-16)(6,-16)(6,-17)(4,-17)(4,-18)(3,-18)(3,-10)
\psline[linewidth=3pt](3,-18)(3,-19)

\psframe[fillcolor=yellow,fillstyle=solid](7,-11)(10,-12)
\psframe[fillcolor=yellow,fillstyle=solid](7,-13)(10,-14)
\psframe[fillcolor=yellow,fillstyle=solid](7,-15)(10,-16)

\psframe[fillcolor=yellow,fillstyle=solid](7,-10)(6,-7)
\psframe[fillcolor=yellow,fillstyle=solid](5,-10)(4,-7)

\psframe[fillcolor=green,fillstyle=solid](0,-10)(3,-11)
\psframe[fillcolor=green,fillstyle=solid](0,-12)(3,-13)
\psframe[fillcolor=green,fillstyle=solid](0,-18)(3,-19)

\rput(0,-1){
\cell(1,1)[] \cell(2,1)[] \cell(2,2)[] \cell(3,1)[] \cell(3,2)[] \cell(3,3)[]
\cell(4,1)[] \cell(4,2)[] \cell(4,3)[] \cell(4,4)[] \cell(5,1)[] \cell(5,2)[]
\cell(5,3)[] \cell(5,4)[] \cell(5,5)[] \cell(6,1)[] \cell(6,2)[] \cell(6,3)[]
\cell(6,4)[] \cell(6,5)[] \cell(6,6)[] \cell(7,1)[] \cell(7,2)[]
\cell(7,3)[] \cell(7,4)[] \cell(7,5)[] \cell(7,6)[] \cell(7,7)[]
\cell(8,1)[] \cell(8,2)[] \cell(8,3)[] \cell(8,4)[] \cell(8,5)[] \cell(8,6)[]
\cell(8,7)[] \cell(8,8)[] \cell(9,1)[\UP] \cell(9,2)[\UP] \cell(9,3)[\UP] \cell(9,4)[]
\cell(9,5)[\UP] \cell(9,6)[] \cell(9,7)[\UP] \cell(9,8)[] \cell(9,9)[] \cell(10,1)[]
\cell(10,2)[] \cell(10,3)[] \cell(10,4)[\UP] \cell(10,5)[] \cell(10,6)[]
\cell(10,7)[] \cell(10,8)[] \cell(10,9)[] \cell(10,10)[] \cell(11,1)[]
\cell(11,2)[] \cell(11,3)[] \cell(11,4)[] \cell(11,5)[] \cell(11,6)[]
\cell(11,7)[] \cell(11,8)[] \cell(11,9)[] \cell(11,10)[\LT] \cell(11,11)[]
\cell(12,1)[] \cell(12,2)[] \cell(12,3)[] \cell(12,4)[] \cell(12,5)[]
\cell(12,6)[\UP] \cell(12,7)[] \cell(12,8)[] \cell(12,9)[] \cell(12,10)[]
\cell(12,11)[] \cell(12,12)[] \cell(13,1)[] \cell(13,2)[] \cell(13,3)[]
\cell(13,4)[] \cell(13,5)[] \cell(13,6)[] \cell(13,7)[] \cell(13,8)[]
\cell(13,9)[] \cell(13,10)[\LT] \cell(13,11)[] \cell(13,12)[] \cell(14,1)[]
\cell(14,2)[] \cell(14,3)[] \cell(14,4)[] \cell(14,5)[] \cell(14,6)[]
\cell(14,7)[] \cell(14,8)[] \cell(14,9)[] \cell(14,10)[] \cell(14,11)[\LT]
\cell(15,1)[] \cell(15,2)[] \cell(15,3)[] \cell(15,4)[] \cell(15,5)[]
\cell(15,6)[] \cell(15,7)[] \cell(15,8)[] \cell(15,9)[] \cell(15,10)[\LT]
\cell(15,11)[] \cell(16,1)[] \cell(16,2)[] \cell(16,3)[] \cell(16,4)[]
\cell(16,5)[\LT] \cell(16,6)[] \cell(17,1)[] \cell(17,2)[] \cell(17,3)[]
\cell(17,4)[\LT] \cell(18,1)[] \cell(18,2)[] \cell(18,3)[] 
}
\rput(3,-5){\psframe[linewidth=2pt, linecolor=red](0,-4)(4,-5)}
\rput(5,-5){\psframe[linewidth=2pt, linecolor=red](4,-5)(5,-11)}
 \end{pspicture}
 \caption{Expansion}
    \label{fig:expansion}
  \end{figure}

  We consider $x$ and $y$ as positive integers. Let $N=n+x+y-2$.  Note
  $T\in\PTB(n)$ with $\diag(T)=0$ can be considered as an element in
  $\PT(n)$. Thus by Theorem~\ref{thm:CN}, the left hand side of the formula is
  equal to
\begin{equation}
  \label{eq:12}
  (x+y)_{n-1} + 
\sum_{\substack{T\in\PTB(n)\\ \diag(T)\ne0}} x^{\urr(T)-1} y^{\toponez(T)}  z^{\diag(T)}.
\end{equation}

Given $T\in\PTB(n)$ with $\diag(T)\ne0$, we define $T'\in\PTB(N)$ as
follows. Let Row $m$ be the topmost unrestricted row of $T$. Since
$\diag(T)\ne0$ we have $m<0$. Then $T'$ is obtained from $T$ by inserting $y-1$
columns to the left of Column $|m|$, $y-1$ rows above Row $m$ and $x-1$ columns
at the end, where the new $x-1$ cells attached to the left of Row $m$ are fill
with $\uparrow$'s as shown Figure~\ref{fig:expansion}.

As we did in the first proof of Theorem~\ref{thm:CN}, we can move each arrow in
Row $m$ (resp.~Column $|m|$) to one of the $y-1$ cells above it (resp.~to the
left of it), and we can place a $\leftarrow$ in one of the $x-1$ cells to the
left of each unrestricted row, except Row $m$. Let $X(T)$ denote the set of type
B permutation tableaux obtained in this way. Clearly $X(T)$ has
$x^{\urr(T)-1}y^{\toponez(T)}$ elements and for each $Q\in X(T)$ we have
$z^{\diag(Q)} = z^{\diag(T)+y-1}$. Thus \eqref{eq:12} is equal to
\begin{equation}
    \label{eq:11}
  (x+y)_{n-1} + 
   \sum_{\substack{T\in\PTB(n)\\ \diag(T)\ne0}} \sum_{Q\in X(T)} z^{\diag(Q)-y+1}.
\end{equation}

On the other hand, one can check that $Q\in X(T)$ for some $T\in\PTB(n)$ with
$\diag(T)\ne0$ if and only if $Q$ satisfies the following conditions.  Let Row
$M$ be the topmost unrestricted row of $Q$.
\begin{itemize}
\item We have $M\leq -y$ and $|M|\leq N+x-1$.
\item Row $r$ is unrestricted for all $r$ with $M\leq r\leq M+y-1$.
\item Column $c$ has a $\uparrow$ in Row $M$ for all $c$ with $N-x+2\leq
  c\leq N$.
\end{itemize}

Now let $\pi=\CNB(Q)\in B_N$. By Proposition~\ref{thm:1}, $Q$ satisfies the
above conditions if and only if $M=\min(\pi)\leq -y$, $|M|\leq N+x-1$ and the
integers $N,N-1,\ldots,N-x+2, M,M+1,\ldots,M+y-1$ are arranged in this order in
$\pi$. Suppose $\pi\in B_n$ satisfies these conditions. Let $A\subset[N]$ be the
set of integers $a$ such that $\pi$ has $-a$. Since $N,N-1,\ldots,N-x+2$ appear
in $\pi$, we must have $A\subset [n+y-1]$. Since $M\leq -y$ and
$M,M+1,\ldots,M+y-1$ appear in $\pi$, the largest $y$ elements in $A$ are
consecutive. Let $A'$ be the set obtained from $A$ by deleting the largest $y-1$
elements. Then $\emptyset\ne A'\subset[n]$. Note that we can reconstruct $A$
from $A'$. When $A'$ is fixed, the number of such $\pi\in B_N$ is equal to
$\frac{N!}{(x+y-1)!}=(x+y)_{n-1}$.  Since $\diag(Q)=\neg(\pi)$, the number of
negative integers in $\pi$, and $\neg(\pi)=|A|=|A'|+y-1$, we obtain that
\eqref{eq:11} is equal to the following which finishes the proof:
\[
(x+y)_{n-1}+\sum_{\emptyset \ne A'\subset[n]} z^{|A'|}(x+y)_{n-1} =(1+z)^n(x+y)_{n-1}.
\]
\end{proof}

\begin{proof}[Second proof of Theorem~\ref{thm:gf_typeB}]
  Let $T\in\PTB(n)$ and $\pi=\CNB(T)$. We decompose $\pi$ as $\pi=\sigma\tau m
  \rho$, where $m=\min(\pi)$, the last element of $\sigma$ is greater than $|m|$
  and all the entries in $\tau$ are smaller than $|m|$.  Then by
  Proposition~\ref{thm:1}, $\diag(T)=\neg(\pi)$, $\urr(T)-1=\RLmin(\rho)$,
  $\toponez(T)=\RLmax(\sigma)+\RLmin(\tau)$, where for a word $w$ of integers,
  $\RLmin(w)$ (resp.~$\RLmax(w)$) denotes the number of RL-minima
  (resp.~RL-maxima) of $w$.  Thus
  \begin{equation}
    \label{eq:10}
\sum_{T\in\PTB(n)} x^{\urr(T)-1} y^{\toponez(T)} z^{\diag(T)}=
\sum_{\pi\in B_n} z^{\neg(\pi)} x^{\RLmin(\rho)}
y^{\RLmax(\sigma)+\RLmin(\tau)}.
 \end{equation}

 On the other hand, consider the set $C_1$ (resp.~$C_2$ and $C_3$) of cycles of
 $\sigma$ (resp.~$\tau$ and $\rho$) according to RL-maxima (resp.~RL-minima and
 RL-minima). Then 
\[
z^{\neg(\pi)} x^{\RLmin(\rho)} y^{\RLmax(\sigma)+\RLmin(\tau)}
=z^{\neg(\pi)} x^{|C_3|} y^{|C_1|+|C_2|}.
\]
Note that $\pi$ is determined by the ordered triple $(C_1, C_2, C_3)$ and
$m$. Moreover, since each cycle in $C_1$ contains at least one integer greater
than $|m|$ and no cycle in $C_2$ has this property, $\pi$ is determined by the
ordered pair $(C_1\cup C_2, C_3)$ and $m$. Let us fix the set $A\subset[n]$ of
integers $a$ such that $\pi$ has $-a$. Note that $A$ determines the set $S$ of
integers in $\pi$ and $m=\min(S)$. Since $(C_1\cup C_2)\cup C_3$ is the set of
cycles of a permutation on $S\setminus\{m\}$, the right hand side of
\eqref{eq:10} is equal to
\[
\sum_{A\subset [n]} z^{|A|}\sum_{i,j} c(n-1,i+j)\binom{i+j}{i}x^i y^j,
\]
which is equal to $(1+z)^n (x+y)_{n-1}$ as we have shown in the second proof of
Theorem~\ref{thm:CN}.
\end{proof}

\section{Zigzag maps}
\label{sec:zigzag}

Steingr\'imsson and Williams \cite{Steingrimsson2007} defined the \emph{zigzag
  map} $\zp:\PT(n)\to S_n$ on permutation tableaux as follows.  A \emph{zigzag
  path} on a permutation tableau $T\in\PT(n)$ is a path entering from the left
of a row or the top of a column, going to the east or to the south changing the
direction alternatively whenever it meets a $1$ until exiting the tableau, see
Figure~\ref{fig:threetab}. Then $\zp(T)$ is defined to be the permutation
$\pi=\pi_1\cdots\pi_n$ where $\pi_i=j$ if the zigzag path starting from Row $i$
or Column $i$ exits $T$ from Row $j$ or Column $j$. It is easy to see that
$\zp(T)$ is a permutation of $[n]$.

\begin{figure}
  \centering
  \begin{pspicture}(-1,1)(6,-7) 
\psline{C-C}(0,-7)(0,0)(6,0)
\cell(1,1)[0] \cell(1,2)[0] \cell(1,3)[1]
    \cell(1,4)[0] \cell(1,5)[0] \cell(1,6)[1] \cell(2,1)[0] \cell(2,2)[0]
    \cell(2,3)[0] \cell(2,4)[1] \cell(2,5)[1] \cell(2,6)[1] \cell(3,1)[0]
    \cell(3,2)[0] \cell(3,3)[0] \cell(3,4)[0] \cell(3,5)[1] \cell(4,1)[0]
    \cell(4,2)[1] \cell(4,3)[1] \cell(5,1)[1] \cell(6,1)[1] 
    \colnum[0,12] \colnum[1,9] \colnum[2,8] \colnum[3,6] \colnum[4,5]
    \colnum[5,3] \rownum[0,1] \rownum[1,2] \rownum[2,4] \rownum[3,7]
    \rownum[4,10] \rownum[5,11] \rownum[6,13]
\psline[linecolor=red,arrowsize=.4, arrowlength=.6]{->}(0,-.5)(2.5,-.5)(2.5,-3.5)(3.5,-3.5)
\end{pspicture}\hfill
  \begin{pspicture}(-1,1)(6,-7) 
\psline{C-C}(0,-7)(0,0)(6,0)
\cell(1,1)[] \cell(1,2)[] \cell(1,3)[\UP] \cell(1,4)[] \cell(1,5)[] \cell(1,6)[\UP] \cell(2,1)[] \cell(2,2)[] \cell(2,3)[\LT] \cell(2,4)[\UP] \cell(2,5)[\UP] \cell(2,6)[] \cell(3,1)[] \cell(3,2)[] \cell(3,3)[] \cell(3,4)[\LT] \cell(3,5)[] \cell(4,1)[] \cell(4,2)[\UP] \cell(4,3)[] \cell(5,1)[\UP] \cell(6,1)[] 
   \colnum[0,12] \colnum[1,9] \colnum[2,8] \colnum[3,6] \colnum[4,5]
    \colnum[5,3] \rownum[0,1] \rownum[1,2] \rownum[2,4] \rownum[3,7]
    \rownum[4,10] \rownum[5,11] \rownum[6,13]
\psline[linecolor=red,arrowsize=.4, arrowlength=.6]{->}(0,-.5)(2.5,-.5)(2.5,-1.5)(3.5,-1.5)(3.5,-2.5)(5.5,-2.5)
\end{pspicture}\hfill
 \begin{pspicture}(-1,1)(6,-7) 
\psline{C-C}(0,-7)(0,0)(6,0)
\cell(1,1)[] \cell(1,2)[] \cell(1,3)[\DT] \cell(1,4)[] \cell(1,5)[] \cell(1,6)[\DT] \cell(2,1)[] \cell(2,2)[] \cell(2,3)[] \cell(2,4)[\DT] \cell(2,5)[\DT] \cell(2,6)[] \cell(3,1)[] \cell(3,2)[] \cell(3,3)[] \cell(3,4)[] \cell(3,5)[\DT] \cell(4,1)[] \cell(4,2)[\DT] \cell(4,3)[] \cell(5,1)[\DT] \cell(6,1)[\DT] 
   \colnum[0,12] \colnum[1,9] \colnum[2,8] \colnum[3,6] \colnum[4,5]
    \colnum[5,3] \rownum[0,1] \rownum[1,2] \rownum[2,4] \rownum[3,7]
    \rownum[4,10] \rownum[5,11] \rownum[6,13]
\psline[linecolor=red,arrowsize=.4, arrowlength=.6]{->}(0,-.5)(2.5,-.5)(2.5,-4.5)
\end{pspicture}
\caption{Zigzag paths on a permutation tableau $T$ (left), on the alternative
  representation of $T$ (middle) and on the bare representation of $T$
  (right).}\label{fig:threetab}
\end{figure}
 
Steingr\'imsson and Williams \cite{Steingrimsson2007} showed that the map
$\zp:\PT(n)\to S_n$ is in fact a bijection preserving many interesting
statistics.  Similarly, we can define the zigzag map $\za:\PT(n)\to S_n$ on the
alternative representation in the same way: whenever we meet an arrow we change
the direction, see Figure~\ref{fig:threetab}.

In this section we show that $\za$ is essentially the same as the first
bijection $\CN:\PT(n)\to S_n$ of Corteel and Nadeau \cite{Corteel2009}.

Given $T\in\PT(n)$, we can consider $\za(T)$ as a product of cycles as follows.
Assume that $T$ has $k$ columns with labels $c_1<c_2<\cdots<c_k$. Note that each
column has exactly one $\uparrow$ and may have several $\leftarrow$'s. For each
Column $c_i$ let $C_i$ be the cycle
$(r^{(i)}_1,r^{(i)}_2,\dots,r^{(i)}_{\ell_{i}},c_i,r^{(i)}_0)$ where $r^{(i)}_0$
is the label of the row containing the unique $\uparrow$ in Column $c_i$ and
$r^{(i)}_1<r^{(i)}_2<\cdots<r^{(i)}_{\ell_{i}}$ are the labels of the rows
containing a $\leftarrow$ in Column $c_i$. It is easy to see that $\za(T)$ is
equal to the cycle product $C_1C_2\cdots C_k$. For example, if $T$ is the
permutation tableau in Figure~\ref{fig:threetab}, then
\[
\za(T) = (3,1) (5,2) (4,6,2) ( 2,8,1) (9,7) (12,10).
\]

Let $\pi^{(i)}=C_iC_{i+1}\cdots C_k$ for $i\in[k+1]$. Thus $\pi^{(1)}$ is equal
to $\za(T)$ and $\pi^{(k+1)}$ is the identity. We represent $\pi^{(i)}$ as a
product of cycles as follows. First we represent $\pi^{(k+1)} =
(m_1)(m_2)\cdots(m_s)$ where $m_1<m_2<\cdots<m_s$ are the labels of the
unrestricted rows of $T$. For each $i\in[k]$, we represent $\pi^{(i)}$ using the
representation of $\pi^{(i+1)}$ as follows. From the construction it is easy to
see that among the elements in the cycle
$C_i=(r^{(i)}_1,r^{(i)}_2,\dots,r^{(i)}_{\ell_{i}},c_i,r^{(i)}_0)$, $r^{(i)}_0$
is the only integer which appears in the cycle product representation of
$\pi^{(i+1)}$. Note that for different integers
$a_1,\dots,a_p,b_1,\dots,b_q,d_1,\dots,d_r$ and $x$, we have
\[(a_1,\dots,a_p,x)(b_1,\dots,b_q,x, d_1,\dots,d_r) =
(b_1,\dots,b_q,a_1,\dots,a_p,x,d_1,\dots,d_r).\] 

By this product rule, we represent $\pi^{(i)}$ as the one obtained by inserting
$r^{(i)}_1,r^{(i)}_2,\dots,r^{(i)}_{\ell_{i}},c_i$ before $r^{(i)}_0$ in the
representation of $\pi^{(i+1)}$.  Since this insertion process is exactly the
same as that in the definition of the map $\CN$, we obtain that $\pi^{(1)}$ is
equal to $\varphi\circ\CN(T)$. Here, for a permutation $\sigma$,
$\varphi(\sigma)$ denotes the permutation consisting of the cycles of the word
$\sigma$ according to RL-minima. Thus we get the following theorem. 

\begin{thm}\label{thm:10}
  The zigzag map $\za$ is the same as $\varphi\circ\CN$.
\end{thm}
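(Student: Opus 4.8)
The argument is essentially contained in the discussion preceding the statement, and the plan is to organize it into three steps, none of which needs a new idea. First I would make rigorous the ``easy to see'' claim that $\za(T)$ factors as the cycle product $C_1C_2\cdots C_k$. This is the analogue for the alternative representation of the column-by-column behaviour of zigzag paths exploited by Steingr\'imsson and Williams in \cite{Steingrimsson2007}: one follows a zigzag path through the arrows of a single Column $c_i$, where it is deflected by the unique $\uparrow$ in Row $r^{(i)}_0$ and by the $\leftarrow$'s in Rows $r^{(i)}_1<\cdots<r^{(i)}_{\ell_i}$. Since no arrow points to another arrow, these deflections do not interfere, and the net effect on labels is exactly the cycle $C_i=(r^{(i)}_1,\ldots,r^{(i)}_{\ell_i},c_i,r^{(i)}_0)$; composing these per-column contributions yields $\za(T)=C_1\cdots C_k$. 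I expect this bookkeeping to be the main obstacle, although it is routine once the conventions for the alternative representation are fixed.

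Second, I would set $\pi^{(i)}=C_iC_{i+1}\cdots C_k$ and prove by downward induction on $i$ that $\pi^{(i)}$ has a cycle representation obtained from that of $\pi^{(i+1)}$ by inserting $r^{(i)}_1,\ldots,r^{(i)}_{\ell_i},c_i$ immediately before $r^{(i)}_0$. The base case is $\pi^{(k+1)}=\mathrm{id}$, written as the product of one-cycles $(m_1)(m_2)\cdots(m_s)$ over the unrestricted-row labels $m_1<\cdots<m_s$. For the inductive step I would use that (a) among the entries of $C_i$, only $r^{(i)}_0$ already occurs in $\pi^{(i+1)}$, since $c_i$ is a fresh column label and each $r^{(i)}_j$ is a row carrying a $\leftarrow$ in Column $c_i$; and (b) the identity $(a_1,\ldots,a_p,x)(b_1,\ldots,b_q,x,d_1,\ldots,d_r)=(b_1,\ldots,b_q,a_1,\ldots,a_p,x,d_1,\ldots,d_r)$ with $x=r^{(i)}_0$. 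Together these show that the cycle through $r^{(i)}_0$ grows by exactly the claimed insertion while all other cycles are unchanged.

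Finally I would observe that this insertion rule is verbatim the rule defining $\CN$, and that the starting data agree: the increasing word $m_1m_2\cdots m_s$ of unrestricted-row labels is both the initial word for $\CN$ and, read as a product of fixed points, the permutation $\pi^{(k+1)}$. Hence the word produced by $\CN$ and the cycle product $\pi^{(1)}$ arise from the same sequence of insertions. Since each insertion is made immediately before an already-present entry, it never moves a block-endpoint, so in the final word $\pi=\CN(T)$ the cycles of $\pi^{(1)}$ are exactly the maximal blocks ending at one of $m_1,\ldots,m_s$; by Lemma~\ref{lem:topone}(1) these labels are precisely the RL-minima of $\pi$, so the block decomposition of $\pi$ is its decomposition into cycles according to RL-minima, i.e. $\pi^{(1)}=\varphi(\pi)$. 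This gives $\za(T)=\pi^{(1)}=\varphi(\CN(T))$, which is the assertion.
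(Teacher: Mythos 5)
Your proposal is correct and follows essentially the same route as the paper: the factorization $\za(T)=C_1\cdots C_k$, the downward induction on $\pi^{(i)}=C_i\cdots C_k$ via the cycle-multiplication identity, and the identification of the insertion rule with that of $\CN$ are exactly the paper's argument, with your appeal to Lemma~\ref{lem:topone}(1) making explicit the final identification of the cycle blocks with the cycles according to RL-minima.
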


For example, if $T$ is the permutation tableau in Figure~\ref{fig:threetab}, we
have
  \[\CN(T)=4,6,5,2,8,3,1,9,7,12,10,11,13,\]
\[\varphi\circ\CN(T)=(4,6,5,2,8,3,1)(9,7)(12,10)(11)(13)=\za(T).\]

\begin{remark}
  Burstein \cite{Burstein2007} defined the \emph{bare representation} of
  $T\in\PT(n)$ to be the diagram obtained from $T$ by replacing the topmost or
  the leftmost $1$'s with dots and removing the rest of the $0$'s and $1$'s, see
  Figure~\ref{fig:threetab}. In the same paper, he defined a map, say $\Theta$,
  on the bare representation by considering it as a binary forest. Similarly,
  one can check that $\varphi\circ \Theta$ is in fact the zigzag map on the bare
  representation.  As Nadeau mentioned in \cite{Nadeau}, though not explicitly
  stated, one can obtain a bare representation from an alternative
  representation (of another permutation tableau) very easily: replace all the
  arrows with dots. Since there is exactly one $\uparrow$ in each column and it
  is the topmost arrow, we can reconstruct the alternative representation.  It
  is worth noting that although the three bijections $\zp$, $\Phi$ and $\Theta$
  have very different descriptions, they are all zigzag maps on different
  representations.
\end{remark}

We can do the same thing for type B permutation tableaux. To state more
precisely, we define the following.  A \emph{zigzag path} on the alternative
representation of $T\in\PTB(n)$ is defined in the same way with the obvious
additional change of direction when we encounter the diagonal line, see
Figure~\ref{fig:typeB}. Note that a zigzag path for $T\in\PTB(n)$ always starts
from a row.

\begin{figure}
  \centering
\begin{pspicture}(-1,1)(6,-11) 
\cell(2,1)[]
  \rput(0,-1){\multido{\n=0+1}{6}{\rput(\n,-\n){\psline(1,0)}}}
\psline{c-c}(6,-6)(0,0)(0,-11)
 \cell(3,1)[\UP] \cell(3,2)[] \cell(4,1)[] \cell(4,2)[]
  \cell(4,3)[] \cell(5,1)[] \cell(5,2)[] \cell(5,3)[\LT]
  \cell(5,4)[\UP] \cell(6,1)[] \cell(6,2)[] \cell(6,3)[]
  \cell(6,4)[] \cell(6,5)[] \cell(7,1)[] \cell(7,2)[] \cell(7,3)[]
  \cell(7,4)[] \cell(7,5)[\LT] \cell(7,6)[\UP] \cell(8,1)[] \cell(8,2)[]
  \cell(8,3)[] \cell(8,4)[] \cell(9,1)[] \cell(9,2)[] \cell(9,3)[\LT]
  \cell(9,4)[] \cell(10,1)[\LT] \cell(10,2)[\UP] \cell(10,3)[]
  \rownum[0,-10] \rownum[1,-9] \rownum[2,-8] \rownum[3,-6]
  \rownum[4,-3] \rownum[5,-2] \rownum[6,1] \rownum[7,4] \rownum[8,5]
  \rownum[9,7] \rownum[10,11] 
    \colnum[0,10] \colnum[1,9] \colnum[2,8] \colnum[3,6] \colnum[4,3] \colnum[5,2] 
  \psline[linecolor=red,arrowsize=.4, arrowlength=.6]{->}(0,-3.5)(3.5,-3.5)(3.5,-4.5)(4.5,-4.5)(4.5,-6.5)(5.5,-6.5)(5.5,-7.5)
\end{pspicture}
\caption{A zigzag map on the alternative representation of some $T\in\PT(n)$.}
  \label{fig:typeB}
\end{figure}
 
For $T\in\PTB(n)$ and $i\in[n]$, we define $i_T=i$ if Column $i$ has a
$\uparrow$, and $i_T=-i$ otherwise. Let $R(T)=\{1_T,2_T,\dots,n_T\}$. Note that
$R(T)$ is the set of integers appearing in the permutation $\CNB(T)$.  Then we
define $\zaB(T)$ to be the map $\pi:R(T)\to R(T)$ such that for each $i\in[n]$,
if the zigzag path entering from Row $i$ or Row $-i$ exits $T$ from Row $j$ or
Column $j$, then $\pi(i_T)=j_T$. It is easy to see that $\pi: R(T) \to R(T)$ is
a bijection.

\begin{example}
  For $T$ in Figure~\ref{fig:typeB}, we have
  $R(T)=\{1,2,-3,4,5,6,7,-8,9,10,11\}$ and $\zaB(T)= \pi:R(T)\to R(T)$ is the
  following
  \begin{equation}
    \label{eq:4}
\pi = \left(\begin{array}{rrrrrrrrrrr}
  1 & 2 & - 3 &4& 5& 6& 7& -8& 9& 10& 11\\
  -3 & 1 & 5 & 4 & -8 & 2 & 10 & 9 & 7 & 6 & 11
\end{array} \right),
 \end{equation}
where the bi-letter $\begin{array}{cc} i\\ j \end{array}$ means that $\pi(i) =j$.
\end{example}

Note that we can identify $\zaB(T)=\pi$ with an element in $B_n$ by reading the
lower line in the two line notation of $\pi$ as shown in \eqref{eq:4}. For
example, the $\pi$ in \eqref{eq:4} is identified with $ -3 , 1 , 5 , 4 , -8 , 2
, 10 , 9 , 7 , 6 , 11 \in B_{11}$. Thus $\zaB$ is a map from $\PTB(n)$ to
$B_n$.

We extend the definition of the map $\varphi$ on $S_n$ to $B_n$ as follows. For
$\pi\in B_n$, we define $\varphi(\pi)$ to be the permutation on the set of
integers in $\pi$ consisting of the cycles of the word $\pi$ according to
RL-minima.  For example, if $\sigma=9,7,10,6,2,1,-3,5,-8,4,11$, then
\[
\varphi(\sigma)=(9,7,10,6,2,1,-3,5,-8)(4)(11).
\]

By the same argument used for proving Theorem~\ref{thm:10}, we obtain the
following theorem.

\begin{thm}\label{thm:11}
  The zigzag map $\zaB:\PTB(n)\to B_n$ is the same as $\varphi\circ \CNB$.
\end{thm}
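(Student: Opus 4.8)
The plan is to mirror the proof of Theorem~\ref{thm:10} essentially line by line, replacing cycle products in $S_n$ with cycle products in the symmetric group on the set $R(T)$ of signed integers appearing in $\CNB(T)$. First I would observe that, just as in type A, the zigzag map on the alternative representation of $T\in\PTB(n)$ decomposes as a product of cycles, one for each column. Concretely, fix the columns of $T$ with labels $c_1<c_2<\cdots<c_k$. For a column $c_i$ that has a $\uparrow$, let $r^{(i)}_0$ be the label of the row with that $\uparrow$; if $c_i$ has no $\uparrow$ (so $-c_i\in R(T)$, and the $\uparrow$ is ``hidden'' in the diagonal cell in Row $-c_i$) set $r^{(i)}_0=-c_i$; in either case let $(c_i)_T$ denote $c_i$ or $-c_i$ accordingly, and let $r^{(i)}_1<\cdots<r^{(i)}_{\ell_i}$ be the labels of the rows with a $\leftarrow$ in Column $c_i$. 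I would then check, by tracing a zigzag path through Column $c_i$ exactly as in the type A argument — with the new feature that the diagonal line acts as a mirror, which is precisely why a column without a $\uparrow$ still contributes the cycle $(r^{(i)}_1,\dots,r^{(i)}_{\ell_i},(c_i)_T)$ turning back at the diagonal — that $\zaB(T)$ equals the cycle product $C_1C_2\cdots C_k$ on $R(T)$, where $C_i=(r^{(i)}_1,\dots,r^{(i)}_{\ell_i},(c_i)_T,r^{(i)}_0)$ when $c_i$ has a $\uparrow$, and $C_i=(r^{(i)}_1,\dots,r^{(i)}_{\ell_i},(c_i)_T)$ (a cycle not ending at a previously-seen element) otherwise.

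Next I would run the same telescoping argument as in Section~\ref{sec:zigzag}. Set $\pi^{(i)}=C_iC_{i+1}\cdots C_k$ for $i\in[k+1]$, so $\pi^{(1)}=\zaB(T)$ and $\pi^{(k+1)}$ is the identity on $R(T)$, which I represent as the product of fixed points $(m_1)(m_2)\cdots(m_s)$ where $m_1<\cdots<m_s$ are the labels of the unrestricted rows of $T$ (these are exactly the RL-minima of $\CNB(T)$ by Proposition~\ref{thm:1}(3), and they lie in $R(T)$). Going from $\pi^{(i+1)}$ to $\pi^{(i)}$, I use the same product rule $(a_1,\dots,a_p,x)(b_1,\dots,b_q,x,d_1,\dots,d_r)=(b_1,\dots,b_q,a_1,\dots,a_p,x,d_1,\dots,d_r)$: when $c_i$ has a $\uparrow$, the only element of $C_i$ already present in the representation of $\pi^{(i+1)}$ is $r^{(i)}_0$, so $\pi^{(i)}$ is obtained by inserting $r^{(i)}_1,\dots,r^{(i)}_{\ell_i},c_i$ before $r^{(i)}_0$; when $c_i$ has no $\uparrow$, none of $r^{(i)}_1,\dots,r^{(i)}_{\ell_i},-c_i$ appears yet (Row $-c_i$ is restricted since it is a diagonal row, hence not among the $m_j$, and the $r^{(i)}_j$'s are labels of rows with a $\leftarrow$ hence also restricted), so we simply adjoin the new cycle $(r^{(i)}_1,\dots,r^{(i)}_{\ell_i},-c_i)$ — but this matches exactly the clause ``if Column $c_i$ has no $\uparrow$, then do nothing'' followed by inserting $r_1,\dots,r_s$ before $-c_i$ in the definition of $\CNB$, because $-c_i$ is already in $\pi^{(i+1)}$ in the $\CNB$ construction and the cycle $(r^{(i)}_1,\dots,r^{(i)}_{\ell_i},-c_i)$ is precisely the record of placing those entries in front of $-c_i$ together with $-c_i$ turning back on itself. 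Therefore each step of the cycle-product recursion performs exactly the insertion prescribed by $\CNB$, and $\pi^{(1)}=\zaB(T)$ equals the permutation whose cycles, read as words according to RL-minima, spell out $\CNB(T)$; that is, $\zaB(T)=\varphi\circ\CNB(T)$.

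Finally I would spell out why $\varphi$ of a word $w$ of distinct signed integers is recovered by this process: the cycles of $w$ according to RL-minima are exactly the blocks between successive RL-minima, and the cumulative insertion described above builds up $w$ so that at each stage the first letter of each cycle in the current representation is an RL-minimum-so-far; since after all $k$ columns every entry of $\CNB(T)$ has been inserted, the cycle structure of $\pi^{(1)}$ is precisely the decomposition of $\CNB(T)$ into cycles according to RL-minima, i.e. $\varphi(\CNB(T))$.

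The main obstacle I anticipate is bookkeeping the ``hidden $\uparrow$'' columns: I must verify carefully that for a column $c_i$ with no $\uparrow$, the element $-c_i$ and the $\leftarrow$-row labels $r^{(i)}_j$ are genuinely new (not yet appearing in $\pi^{(i+1)}$), so that adjoining the cycle $C_i$ is consistent with both the zigzag path behaviour at the mirror and the ``do nothing, then insert before $-c_i$'' clause of $\CNB$. Once that compatibility is pinned down, the rest is a verbatim copy of the type A argument, so I would simply write ``by the same argument used for proving Theorem~\ref{thm:10}, together with the above observations about columns without a $\uparrow$, we obtain the result,'' rather than repeating the full telescoping computation.
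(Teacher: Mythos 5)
Your overall strategy is exactly the paper's: the paper proves Theorem~\ref{thm:11} by observing that the cycle-product/telescoping argument for Theorem~\ref{thm:10} goes through verbatim once the columns without a $\uparrow$ are accounted for, and your cycles $C_i$ (including $C_i=(r^{(i)}_1,\dots,r^{(i)}_{\ell_i},-c_i)$ for a column with no $\uparrow$) are the right ones. However, your bookkeeping for precisely those columns --- the one point you single out as needing care --- is wrong. You assert that for a column $c_i$ with no $\uparrow$, ``Row $-c_i$ is restricted since it is a diagonal row, hence not among the $m_j$,'' so that $-c_i$ does not yet appear in $\pi^{(i+1)}$ and $C_i$ is adjoined as a new disjoint cycle. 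This is false: a diagonal cell containing a $1$ (equivalently, a column with no $\uparrow$) does not restrict its row. By the paper's criterion, Row $-c_i$ is unrestricted exactly when it has no $\leftarrow$ and Column $c_i$ has no $\uparrow$; in Figure~\ref{fig:alt_typeb}, Column $8$ has no $\uparrow$ and Row $-8$ is unrestricted, so $-8$ is one of the $m_j$ and sits in $\pi^{(k+1)}$ from the very start.

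The correct statement is the opposite of yours, and it is what makes the argument work: when Column $c_i$ has no $\uparrow$, the element $-c_i$ is \emph{always} already present in the representation of $\pi^{(i+1)}$ --- either because Row $-c_i$ is unrestricted (so $-c_i$ is some $m_j$), or because Row $-c_i$ carries a $\leftarrow$, which necessarily lies in a column $c_j$ with $c_j>c_i$ (the cells of Row $-c_i$ all lie in such columns) and hence was inserted at an earlier stage. The cycle $C_i=(r^{(i)}_1,\dots,r^{(i)}_{\ell_i},-c_i)$ therefore shares the element $x=-c_i$ with $\pi^{(i+1)}$, and the product rule
\[(a_1,\dots,a_p,x)(b_1,\dots,b_q,x,d_1,\dots,d_r)=(b_1,\dots,b_q,a_1,\dots,a_p,x,d_1,\dots,d_r)\]
inserts $r^{(i)}_1,\dots,r^{(i)}_{\ell_i}$ before $-c_i$ --- exactly matching the clause of $\CNB$ that says ``do nothing ($-c_i$ is already in $\pi^{(i+1)}$), then add $r_1,\dots,r_s$ before $-c_i$.'' If your claim were taken literally, adjoining a disjoint cycle containing $-c_i$ would either double-define the image of $-c_i$ or fail to reproduce the insertion, and the telescoping identification with $\CNB$ would break at every such column. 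Once this point is corrected, the rest of your write-up does coincide with the paper's proof.
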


For example, if $T$ is the tableau in Figure~\ref{fig:typeB}, then
\[\CNB(T)=9,7,10,6,2,1,-3,5,-8,4,11,\]
\[ \zaB(T)=\varphi\circ \CNB(T)=(9,7,10,6,2,1,-3,5,-8)(4)(11).\]

\section{Further study}
\label{sec:further-study}

In Corollary~\ref{thm:5} we have found the following sign-imbalance formula for
permutation tableaux: if $n=4k+r$ for $0\leq r<4$, then
\begin{equation}
  \label{eq:14}
\sum_{T\in\PT(n)}\sgn(T) =\frac{(1+i)^n + (1-i)^n}{2}= \left\{
  \begin{array}{ll}
(-1)^k \cdot 2^{2k}, & \mbox{if $r=0$ or $r=1$,}\\
0, & \mbox{if $r=2$,}\\
(-1)^{k+1} \cdot 2^{2k+1}, & \mbox{if $r=3$.}
 \end{array} \right.
\end{equation}
Since we have obtained \eqref{eq:14} by putting $t=-1$ in the generating
function in Theorem~\ref{thm:ptx}, we propose the following problem.

\begin{problem}
  Find a combinatorial proof of \eqref{eq:14}.
\end{problem}

There is also a sign-imbalance formula for standard Young tableaux. The sign of
a standard Young tableau $T$ is defined to be the sign of the permutation
obtained by reading $T$ like a book. For example,
\[
\sgn\left(
\raisebox{-.4cm}{\begin{pspicture}(-.1,0)(3,-2)
\cell(1,1)[1] \cell(1,2)[2] \cell(1,3)[5]
\cell(2,1)[3] \cell(2,2)[4]    
\psline(0,-2)(0,0)(3,0)
\end{pspicture} }
\right) = \sgn(1 2 5 34 ) =1.
\]

Stanley \cite{Stanley2005} conjectured the following sign-imbalance formula for
standard Young tableaux:
\begin{equation}
  \label{eq:1}
\sum_{T\in \mathcal{SYT}(n)}\sgn(T)=2^{\left\lfloor \frac n2 \right\rfloor},
\end{equation}
where $\mathcal{SYT}(n)$ is the set of standard Young tableaux with $n$ squares.
Lam \cite{Lam2004} and Sj\"ostrand \cite{Sjostrand2005} independently proved
\eqref{eq:1}, and Kim \cite[Corollary 4.10 and Theorem 4.13]{JSK_imbalance} generalized \eqref{eq:1} to skew
standard Young tableaux.

If we take the absolute values of \eqref{eq:14} and \eqref{eq:1}, we obtain the
following somewhat unexpected result: if $n\not\equiv 2 \mod 4$, then
\begin{equation}
  \label{eq:5}
\left| \sum_{T\in \PT(n)}\sgn(T) \right| =
\left| \sum_{T\in \mathcal{SYT}(n)}\sgn(T) \right|
=2^{\left\lfloor \frac n2 \right\rfloor}.
\end{equation}
In \eqref{eq:5} taking the absolute value of the sign-imbalance for standard
Young tableaux is only for an aesthetic reason. It will be interesting to find a
combinatorial explanation for \eqref{eq:5}.

Since we do not have a type B analog for \eqref{eq:14}, we propose the
following problem.

\begin{problem}
  Define the sign of a type B permutation tableau and find a sign-imbalance for
  type B permutation tableaux.
\end{problem}

\section*{Acknowledgement}
The authors would like to thank Matthieu Josuat-Verg\`es for helpful discussion.
The authors also thank the anonymous referees for their careful reading and
helpful comments, especially the comment for simplifying the proof of
Theorem~\ref{thm:ptx}.

\providecommand{\bysame}{\leavevmode\hbox to3em{\hrulefill}\thinspace}
\providecommand{\MR}{\relax\ifhmode\unskip\space\fi MR }
\providecommand{\MRhref}[2]{%
  \href{http://www.ams.org/mathscinet-getitem?mr=#1}{#2}
}
\providecommand{\href}[2]{#2}

\end{document}